\documentclass[reqno,12pt]{amsart}
\usepackage{amsmath,amsthm,amsfonts,amssymb,latexsym,enumerate}
\usepackage{color, xcolor}
\usepackage{enumerate, enumitem, verbatim, graphicx}
\usepackage[utf8]{inputenc}

\usepackage{latexsym,enumerate}
\usepackage{amssymb, xcolor,mathrsfs}
\usepackage{amsmath,amsthm,amsfonts,amssymb,latexsym, mathabx,euscript,mathtools}

\usepackage[colorlinks,pagebackref]{hyperref}
\usepackage[english]{babel}


\headheight=7pt \textheight=574pt \textwidth=432pt \topmargin=14pt
\oddsidemargin=18pt \evensidemargin=18pt



\newtheorem{thm}{Theorem}[section]
\newtheorem{lem}[thm]{Lemma}

\newtheorem{prop}[thm]{Proposition}

\newtheorem{hypothesis}[thm]{Hypothesis}

\newtheorem{mainthm}{Theorem}
\newtheorem{mainconjecture}[mainthm]{Conjecture}

\theoremstyle{definition}

\newtheorem{remark}[thm]{Remark}

\def\nor{\trianglelefteq\,}

\newcommand{\F}{\mathbb{F}}

\newcommand{\Irr}{{{\operatorname{Irr}}}}

\newcommand{\RNum}[1]{\uppercase\expandafter{\romannumeral #1\relax}}
\newcommand{\Al}{\textup{\textsf{A}}}
\newcommand{\Sy}{\textup{\textsf{S}}}

\newcommand{\cod}{\normalfont{\mbox{cod}}}
\newcommand{\cd}{\normalfont{\mbox{cd}}}
\newcommand{\GL}{\normalfont{\mbox{GL}}}
\newcommand{\GU}{\normalfont{\mbox{GU}}}
\newcommand{\SL}{\normalfont{\mbox{SL}}}

\newcommand{\PSL}{\normalfont{\mbox{PSL}}}
\newcommand{\PSp}{\normalfont{\mbox{PSp}}}
\newcommand{\Sp}{\normalfont{\mbox{Sp}}}
\newcommand{\PSU}{\normalfont{\mbox{PSU}}}
\newcommand{\Aut}{\mathrm{Aut}}
\newcommand{\irr}{\mathrm{Irr}}

\newcommand{\bZ}{{\mathbf Z}}

\newcommand{\bC}{{\mathbf{C}}}
\newcommand{\bG}{{\mathbf{G}}}

\newcommand{\bT}{{\mathbf{T}}}

\newcommand{\bO}{{\mathbf{O}}}

\newcommand{\EC}{\mathcal{E}}

\newcommand{\ta}{\hspace{0.5mm}^{2}\hspace*{-0.2mm}}
\newcommand{\tb}{\hspace{0.5mm}^{3}\hspace*{-0.2mm}}

%
%


\title[The codegree isomorphism problem for finite simple groups]
{The codegree isomorphism problem\\ for finite simple groups II}

\author[N.\,N. Hung]{Nguyen N. Hung}
\address{Department of Mathematics, The University of Akron, Akron,
OH 44325, USA} \email{hungnguyen@uakron.edu}

\author[A. Moret\'{o}]{Alexander Moret\'{o}}
\address{Departamento de Matem\'aticas, Universidad de Valencia,
46100 Burjassot, Valencia, Spain} \email{alexander.moreto@uv.es}


\thanks{We are indebted to Pham Huu Tiep for several ideas in the proof
of Theorem~\ref{thm:E} for classical groups. We also thank him and
Bob Guralnick for helpful comments on group cohomology and Eamonn
O'Brien for checking some cases of Hypothesis~\ref{hypothesis}. Part
of this work was done while the first author was visiting the
Vietnam Institute for Advanced Study in Mathematics (VIASM), and he
thanks the VIASM for the financial support. The second author is
supported by Ministerio de Ciencia e Innovaci\'on (Grant
PID2022-137612NB-I00 funded by
MCIN/AEI/10.13039/501100011033 and ``ERDF A way of making Europe").
He also acknowledges support by Generalitat Valenciana
CIAICO/2021/163. }

\subjclass[2020]{Primary 20C15, 20C30, 20C33, 20D06.}
\keywords{Character codegrees, isomorphism problem, finite simple
groups, Huppert's conjecture}

\begin{document}

\maketitle

\begin{abstract}
Let $H$ be a nonabelian finite simple group. Huppert's conjecture
asserts that if $G$ is a finite group with the same set of complex
character degrees as $H$, then $G\cong H\times A$ for some abelian
group $A$. Over the past two decades, several specific cases of this
conjecture have been addressed. Recently, attention has shifted to
the analogous conjecture for character codegrees: if $G$ has the
same set of character codegrees as $H$, then $G\cong H$.
Unfortunately, both problems have primarily been examined on a
case-by-case basis. In this paper and the companion \cite{hmt}, we
present a more unified approach to the codegree conjecture and
confirm it for several families of simple groups.
\end{abstract}

\section{Introduction}

This paper is a sequel to our previous work \cite{hmt}, in which we
investigated the codegree isomorphism problem for finite simple
groups.

Let $G$ be a finite group. The codegree of a character $\chi$ of $G$
is defined as \[\cod(\chi):= |G:\ker(\chi)|/\chi(1).\] The set of
all the codegrees of irreducible characters of $G$ is denoted by
$\cod(G)$, and the corresponding multiset of the codegrees with
multiplicity is denoted by $C(G)$. This $C(G)$ is called the
\emph{group pseudo-algebra} of $G$. We proved in \cite{hmt} that a
finite group $G$ and a simple group $H$ are isomorphic if and only
if they have the same group pseudo-algebra. In this paper, we drop
the multiplicity and turn our attention to the so-called
\emph{codegree isomorphism conjecture} (see Problem~20.79 of the
Kourovka Notebook \cite{Khukhro}), which says that if $H$ is a
nonabelian simple group and $G$ is a finite group with
$\cod(G)=\cod(H)$ then $G\cong H$. In fact, we propose a stronger
version.

\begin{mainconjecture}\label{conj:hup} Let $H$ be a finite nonabelian simple group and $G$ a
finite group. Then $\cod(G)\subseteq\cod(H)$ if and only if $G\cong
H$.\end{mainconjecture}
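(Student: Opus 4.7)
\emph{Proof plan.} The ``if'' direction is immediate since $G\cong H$ yields $\cod(G)=\cod(H)$. For the ``only if'' direction, I would argue by contradiction using a counterexample $G$ of minimal order to the implication $\cod(G)\subseteq\cod(H)\Rightarrow G\cong H$. The overall strategy is to combine general reductions that exploit the behavior of $\cod$ under quotients with invariants of $\cod(H)$ sensitive enough to distinguish $H$ from every other finite group.

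The first reduction is quotient inheritance: for every $N\nor G$ one has $\cod(G/N)\subseteq\cod(G)\subseteq\cod(H)$, so by minimality every proper normal subgroup $N\ne 1$ must satisfy $G/N\cong H$. Hence either $G$ is already a nonabelian simple group, or $G$ has a maximal normal subgroup $N\ne 1$ with $G/N\cong H$. In the simple case the task reduces to showing that no nonabelian simple $S\not\cong H$ satisfies $\cod(S)\subseteq\cod(H)$. I would first observe that every prime dividing $|S|$ divides some element of $\cod(H)$, giving $\pi(S)\subseteq\pi(H)$. Then, invoking distinguished codegrees of $H$ (for $H$ of Lie type, the Steinberg codegree $|H|_{p'}$; for $H=\AAA_n$, codegrees tied to hook lengths; for sporadic $H$, codegrees of characters of smallest degree), I would try to pin down $|S|$ and identify $S\cong H$ through the classification.

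In the extension branch I must show that a nontrivial extension $1\to N\to G\to H\to 1$ forces a codegree outside $\cod(H)$. If $N$ admits a nontrivial abelian $G$-chief section, it contributes linear codegrees to $G$, so every prime dividing $|N|$ would have to appear in $\cod(H)$; together with control over the small codegrees of $H$ this is already restrictive. For nonabelian chief factors, or when $H$ acts faithfully on $N$, I would invoke Clifford theory: a character $\chi\in\Irr(G)$ covering a nontrivial $\theta\in\Irr(N)$ has codegree involving $|H:T_\theta|$ and the degree of an extension of $\theta$, and simultaneous control of both typically pushes these codegrees outside $\cod(H)$. The cohomological input required here (Schur multipliers and character extensions of $H$) is partially supplied by the multiset result of the companion paper \cite{hmt}.

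The principal obstacle, as with every known instance of the codegree and Huppert isomorphism conjectures, is that this extension-branch argument is not uniform: it depends on detailed knowledge of $\cod(H)$ and of the character degrees of central and non-split extensions of $H$. I therefore expect the proof to proceed family by family (alternating, sporadic, classical Lie type, exceptional Lie type), with the hardest cases being classical groups of small rank in defining characteristic, where the codegree set is sparsest and accidental coincidences with codegrees of covers of $H$ are most likely to defeat the Clifford-theoretic argument.
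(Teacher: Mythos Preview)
This statement is Conjecture~A, which the paper does \emph{not} prove in general; the paper's contribution is to reduce it (Theorem~3.1) to Conjecture~B for all $H\not\cong P\Omega^\pm_{2n}(q)$ with $q$ odd, and then to verify both conjectures for the families in Theorem~C. Your minimal-counterexample framework and the quotient-inheritance step match the skeleton of Theorem~3.1, and your ``simple branch'' is precisely Theorem~B of \cite{hmt}, which the paper cites as already established. On the broad architecture you are aligned with the paper.

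Where your plan diverges is in the structure of the extension branch. The paper first invokes \cite[Theorem~2.3]{hmt} to get $G$ perfect, then shows the minimal normal subgroup $N$ is unique and elementary abelian (the nonabelian case dies in one line: a nonprincipal $\theta\in\Irr(N)$ extending to $\chi\in\Irr(G)$ gives $\cod(\chi)>|H|$). The decisive dichotomy, which you do not isolate, is then $\bC_G(N)=G$ versus $\bC_G(N)=N$. The first makes $G$ quasisimple with $|\bZ(G)|$ prime and is handled by Theorem~D, a type-by-type degree comparison for covers occupying all of Section~2. The second makes $N$ a faithful irreducible $H$-module and reduces to Conjecture~B, whose content is the inequality $\chi(1)_p<|N|$ for some faithful $\chi\in\Irr(G)$; this is attacked in Section~4 via Proposition~4.2 (done whenever $|N|>\sqrt{|H|_p}$, hence whenever $p^{2d(H)}>|H|_p$ with $d(H)$ the minimal nontrivial $p$-Brauer degree) together with permutation-degree bounds in Proposition~4.3. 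Your Clifford-theoretic sketch does not arrive at either the quasisimple degree problem or the $p$-part inequality, and your remark about abelian chief sections contributing ``linear codegrees'' is moot once $G$ is known to be perfect.

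Finally, your guess that the hardest cases are low-rank classical groups in defining characteristic is inverted relative to the paper: Proposition~4.3 disposes of low ranks, and the genuine obstructions are large-rank classical groups in defining characteristic (where neither the Brauer-degree nor the permutation-degree bound suffices) and, separately, $P\Omega^\pm_{2n}(q)$ with $q$ odd, for which Theorem~D itself remains open.
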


The original conjecture has been confirmed for several sporadic
simple groups, Suzuki groups, and linear groups in dimensions 2 and
3, see \cite{BahriAkh, Ahanjideh, gkl, LY22}. However, the approach
in these papers has been primarily case-by-case, employing ad-hoc
arguments for each specific group (in the case of sporadic groups)
or family of groups (in the case of Suzuki and linear groups).

We pursue here (and in the companion paper \cite{hmt}) a more
uniform approach. In particular, we prove in Theorem
\ref{thm:last-section} that, except possibly for odd-characteristic
orthogonal groups in even dimension, Conjecture~\ref{conj:hup}
follows from the following.

\begin{mainconjecture}\label{conj:C}
Let $H$ be a nonabelian simple group. Then, for every perfect group
$G$ with an elementary abelian normal $p$-subgroup $N$ for some
prime $p$ dividing $|H|$ such that $G/N=H$ and $N$ is a faithful
irreducible $H$-module, there exists a faithful irreducible
character $\chi$ of $G$ such that $\chi(1)_p<|N|$.
\end{mainconjecture}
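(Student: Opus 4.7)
The plan is to apply Clifford theory to produce the required faithful character. Since $G/N \cong H$ is simple and $N$ is a faithful irreducible $H$-module, the normal subgroups of $G$ are essentially $\{1, N, G\}$ (a complement $K \cong H$, if it exists, cannot be the kernel of a nontrivial irreducible character of $G$ because $G$ is perfect and $G/K \cong N$ is abelian). Thus an irreducible $\chi$ of $G$ is faithful if and only if it is not inflated from $H$, i.e., $\chi$ lies above some non-trivial $\lambda \in \widehat{N} := \Irr(N)$. By the Clifford correspondence, $\chi = \psi^G$ for some $\psi \in \Irr(T \mid \lambda)$, where $T = T_G(\lambda)$ is the inertia subgroup; setting $L := T/N \le H$, we have $\chi(1) = [H:L]\,\psi(1)$, and by Ito's theorem applied to $N \trianglelefteq T$, $\psi(1)$ divides $|L|$.

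The key reduction is a choice of $\lambda$ with a large stabilizer. Since $\widehat{N}$ is a nontrivial $\F_p$-module for $H$, any Sylow $p$-subgroup $P$ of $H$ has nonzero fixed points on $\widehat{N}$. I would pick a non-trivial $P$-fixed $\lambda$; then $P \le L$, so $[H:L]_p = 1$ and $\chi(1)_p = \psi(1)_p$. If $\lambda$ extends to some $\widetilde{\lambda} \in \Irr(T)$ --- automatic, for instance, when the extension $1 \to N \to G \to H \to 1$ is split --- then Gallagher's theorem gives $\Irr(T\mid\lambda) = \{\widetilde{\lambda}\mu : \mu \in \Irr(L)\}$, and the choice $\mu = 1_L$ yields $\psi(1) = 1$, hence $\chi(1) = [H:L]$ with $\chi(1)_p = 1 < |N|$, as required. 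In general, extension is obstructed by a class $\alpha \in H^2(L, \C^\times)$, and $\Irr(T\mid\lambda)$ is in bijection with irreducibles of the twisted group algebra $\C^\alpha[L]$; one must then exhibit such an irreducible of $p$-part less than $|N|$.

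This last step is where the real work lies, and I expect the proof to proceed case by case on $H$. For alternating and sporadic groups, the faithful irreducible $\F_p H$-modules and their orbit structure are directly analyzable. For groups of Lie type in cross characteristic $\ell \neq p$, $|H|_p$ is small compared to the dimensions of faithful $\ell$-modular modules, so the required inequality should be easy. The main obstacle is groups of Lie type in defining characteristic, where $|H|_p$ can far exceed $|N|$ (e.g., for natural modules of classical groups) and the numerical margin is tight. In that setting I would exploit the observation that a $P$-fixed $\lambda$ in a small $\F_p H$-module typically has $L$ equal to (or contained in) a parabolic subgroup; then the Levi decomposition $L = U \rtimes M$ and the character theory of parabolic subgroups should produce $\psi$ of small $p$-part. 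Controlling the cocycle $\alpha$ in non-split extensions will be the most delicate part, and will likely need to be handled family by family, perhaps by relating $\alpha$ to the Schur multiplier of $L$ or by embedding $G$ in a central cover of $H$ in which the relevant obstruction vanishes.
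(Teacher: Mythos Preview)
First, note that this statement is a \emph{conjecture} in the paper; it is proved only for the families listed in Theorem~D, and your proposal is likewise a plan rather than a complete proof. With that caveat, your framework is correct and matches the paper's: Clifford theory over $N$, the observation that any $\chi\in\Irr(G\mid N)$ is faithful, and the choice of a nontrivial $\lambda\in\widehat{N}$ fixed by a Sylow $p$-subgroup $P$ of $H$ so that $[H:L]_p=1$. Your split-case argument (extend $\lambda$ via Gallagher to get $\chi(1)_p=1$) is exactly the paper's Proposition~\ref{prop-split}.

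Where you and the paper diverge is in the non-split case. You propose to analyse the obstruction class $\alpha\in H^2(L,\mathbb{C}^\times)$ family by family and produce a projective irreducible of $L$ of small $p$-part. The paper bypasses this entirely with a short, uniform trick (Lemma~\ref{lem-proj}): pass to an isomorphic character triple in which $N$ is central and $\lambda$ is linear, and use $|G/N|=\sum_{\chi\in\Irr(G\mid\lambda)}\chi(1)^2$ to conclude that some $\chi\in\Irr(G\mid\lambda)$ satisfies $\chi(1)_p\le\sqrt{|H|_p}$. This single inequality replaces all cocycle control and reduces the problem to the numerical condition $|N|>\sqrt{|H|_p}$ (Proposition~\ref{prop-bra}), which is then verified via lower bounds on $p$-modular degrees (Land\'azuri--Seitz--Zalesskii in cross characteristic, James/M\"uller for alternating groups, Jansen's tables for sporadics). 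For groups of Lie type in defining characteristic, where $|N|\le\sqrt{|H|_p}$ can genuinely occur, the paper does not use your parabolic/Levi strategy either; instead it observes that $[G:T]<|N|\le\sqrt{|H|_p}$ would force $H$ to have a proper subgroup of index below $\sqrt{|H|_p}$, and this is ruled out for the listed families by known bounds on minimal permutation degrees.

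So your plan is not wrong, but the cocycle analysis you anticipate as ``the most delicate part'' is precisely what the paper avoids. The character-triple/sum-of-squares bound is the idea you are missing, and it is what turns most of the case analysis into routine numerics.
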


This path allows us to confirm Conjecture~\ref{conj:hup} in several
cases.

\begin{mainthm}\label{thm:D}
Both Conjectures~\ref{conj:hup} and \ref{conj:C} hold when $H$ is an
alternating group, a sporadic simple group, or a simple group of the
following Lie types: $\ta B_2$, $\ta G_2$, $\ta F_4$, $G_2$, $\tb
D_4$, $F_4$, $\ta E_6$, $A_{\leq 3}$, $\ta A_{\leq 6}$, $C_2$,
$C_3$, and $B_3$.
\end{mainthm}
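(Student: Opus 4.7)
The plan is to invoke Theorem~\ref{thm:last-section}, which reduces Conjecture~\ref{conj:hup} to Conjecture~\ref{conj:C} whenever $H$ is not an even-dimensional orthogonal group in odd characteristic -- a condition satisfied by every simple group in the theorem's list. Thus it suffices to verify Conjecture~\ref{conj:C} family-by-family.

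Fix $H$ and a perfect extension $1 \to N \to G \to H \to 1$ with $N$ a faithful simple $\F_p H$-module, $|N|=p^n$, $p \mid |H|$. By Clifford theory applied to the abelian normal subgroup $N$, every irreducible character of $G$ above a non-trivial $\lambda \in \Irr(N)$ has the form $\chi = \mathrm{Ind}_T^G(\widetilde\lambda\,\psi)$, where $T = N\, I_H(\lambda)$ and $\psi$ is a (possibly projective) irreducible character of $I_H(\lambda)$, so that $\chi(1) = [H:I_H(\lambda)]\cdot\psi(1)$. For $\lambda \ne 1_N$, the intersection $\ker(\chi) \cap N$ is an $H$-invariant proper submodule of $N$, hence trivial by faithful irreducibility; consequently $\chi$ is either faithful on $G$, or the extension splits and $\ker\chi$ is a complement of $N$, an obstruction that can be removed by adjusting $\psi$. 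The crux is therefore to control $\chi(1)_p$.

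My strategy is then, for each family in the theorem, to produce an $H$-orbit $[\lambda]$ and a character $\psi$ of $I_H(\lambda)$ with $\bigl([H:I_H(\lambda)]\cdot\psi(1)\bigr)_p < p^n$. This requires three inputs per family: (i)~a list of the minimal faithful simple $\F_p H$-modules (fixing $p^n$); (ii)~information on $H^2(H,N)$ to handle split extensions; and (iii)~existence of a character of the stabilizer with small $p$-part. For $H$ alternating or sporadic, (i)--(iii) are accessible via Mullineux--James theory and the modular Atlas/GAP tables, reducing the conjecture to a finite verification; hook-length formulas supply the small-$p$-part character in the alternating case. For the exceptional Lie families $\ta B_2$, $\ta G_2$, $\ta F_4$, $G_2$, $\tb D_4$, $F_4$, $\ta E_6$, L\"ubeck's tabulation of restricted simple modules covers the defining characteristic and Guralnick--Tiep-type bounds cover cross characteristic, while the needed $\chi$ is typically a Deligne--Lusztig semisimple or Steinberg-like character of explicitly controllable $p$-part.

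The main obstacle will be the small-rank classical families $A_{\le 3}$, $\ta A_{\le 6}$, $C_2$, $C_3$, $B_3$, where natural, exterior, and spin modules (together with their Frobenius twists) produce faithful simple modules of genuinely small dimension, and where $H^2(H,N)$ can be non-zero for sporadic values of $q$. Finishing these cases will demand careful use of Weil representations in cross characteristic, combined with finitely many explicit computations in the relevant quasisimple covers to dispense with the low-$q$ exceptions; once these are resolved, Theorem~\ref{thm:last-section} completes the proof of Conjecture~\ref{conj:hup} for the listed families.
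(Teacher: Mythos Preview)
Your reduction via Theorem~\ref{thm:last-section} is correct, and the Clifford-theoretic setup is the right framework. However, your plan diverges sharply from the paper's argument and is far more laborious than necessary; moreover, as written it is a programme rather than a proof, since you explicitly defer the small-rank classical cases to unspecified ``finitely many explicit computations''.

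The paper avoids almost all of the case analysis you propose by means of a single counting lemma. The key point (Lemma~\ref{lem-proj} and Proposition~\ref{prop-bra}) is this: since $\Irr(N)$ is a $p$-group acted on by $G$, some non-principal $\lambda$ lies in a $p'$-orbit, so its inertia group $T$ contains a Sylow $p$-subgroup of $G$; replacing the character triple by a central one then forces some $\chi\in\Irr(G|\lambda)$ with $\chi(1)_p\le\sqrt{|H|_p}$. Hence Conjecture~\ref{conj:C} holds automatically whenever $|N|>\sqrt{|H|_p}$, in particular whenever $p^{2d(H)}>|H|_p$ for $d(H)$ the minimal nontrivial $p$-Brauer degree. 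This single inequality disposes of sporadic groups, alternating groups, and all cross-characteristic Lie cases using standard lower bounds (Jansen, James--M\"uller, Land\'azuri--Seitz--Zalesskii). For the defining characteristic, instead of constructing explicit Deligne--Lusztig or Weil characters, the paper argues by contradiction: if $|N|\le\sqrt{|H|_p}$ then the orbit size $|G:T|<|N|\le\sqrt{|H|_p}$ forces a maximal subgroup of $H$ of index below $\sqrt{|H|_p}$, contradicting the known minimal permutation degrees for the listed families (Proposition~\ref{prop-tech}).

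Two further remarks. First, your detour through $H^2(H,N)$ and the split/non-split dichotomy is unnecessary here: since $G$ is perfect and $G/N$ is simple, any $\chi\in\Irr(G|N)$ with non-trivial kernel would have $\ker\chi$ complementing $N$, making $G/\ker\chi\cong N$ abelian and forcing $\chi$ to be trivial---so every $\chi$ over a non-principal $\lambda$ is automatically faithful. Second, your plan to enumerate all faithful simple $\F_pH$-modules is exactly what the paper's counting argument circumvents; the paper never needs to know which module $N$ is, only the single numerical bound $|N|>\sqrt{|H|_p}$.
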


When $H$ is a simple group of Lie type, Conjecture~\ref{conj:C} can
be reduced further to the situation where the prime $p$ is the
defining characteristic of $H$, see Proposition~\ref{prop-tech}.

Conjecture~\ref{conj:C} is related to questions on group cohomology
and $p$-groups that we will discuss in more detail in Section
\ref{sec:cohomo}. This approach appears particularly promising and
we hope that it will be studied further by experts in the field.


Our approach requires investigating the character codegrees of
simple (and even quasisimple) groups. The first step is to
demonstrate that if $S$ and $H$ are finite simple groups such that
$\cod(S)\subseteq\cod(H)$, then $S\cong H$. This was achieved in
\cite{hmt}, utilizing some deep results in the representation theory
of finite simple groups, such as the classification of
prime-power-degree representations, bounds for the largest degree of
irreducible representations, and the existence of $p$-defect zero
characters.

Another major step is to show that the codegree set of a nonabelian
simple group is different from that of a nontrivial perfect central
cover of the group. This turns out to be a daunting task. The
following result might be helpful in future attempts to establish
Huppert's original conjecture for character degrees in full
generality. Several ideas in its proof for the case of classical
groups was kindly given to us by Tiep.

\begin{mainthm}\label{thm:E}
Let $S$ be a finite nonabelian simple group and $G$ a quasisimple
group with center $\bZ(G)$ of prime order such that $G/\bZ(G)\cong
S$. Suppose that $S\not\cong P\Omega^\pm_{2n}(q)$ with $n\geq 4$ and
$q$ odd. Then there exists a faithful character $\chi\in\irr(G)$
such that $\chi(1)/|\bZ(G)|$ is not a character degree of $S$.
Consequently, $\cod(G)\nsubseteq \cod(S)$.
\end{mainthm}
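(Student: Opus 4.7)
First I would reduce the ``consequently'' clause to the character-degree statement. Given a faithful $\chi\in\irr(G)$ with $d:=\chi(1)/|\bZ(G)|\notin\cd(S)$, faithfulness yields $\cod(\chi)=|G|/\chi(1)=|S|/d$. Since $S$ is simple, every nontrivial $\psi\in\irr(S)$ has $\ker(\psi)=1$, so $\cod(S)\setminus\{1\}=\{|S|/\psi(1):\psi\in\irr(S),\ \psi\ne 1_S\}$. Because $d\ne\psi(1)$ for every such $\psi$ and $|S|/d\ne 1$, the codegree $\cod(\chi)$ lies outside $\cod(S)$, proving $\cod(G)\nsubseteq\cod(S)$.

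For the main assertion the plan is to argue via the classification. Writing $p=|\bZ(G)|$, the group $G$ is a central quotient of the universal cover $\widetilde{S}$ by a subgroup $K\le\bZ(\widetilde{S})$ with $\bZ(\widetilde{S})/K\cong\Z/p$, so $p$ divides the exponent of the Schur multiplier $M(S)$; the faithful characters of $G$ lift to those $\widetilde\chi\in\irr(\widetilde{S})$ whose kernel on $\bZ(\widetilde{S})$ equals $K$. For each pair $(S,p)$ with $p\mid |M(S)|$, the aim is to exhibit a single such $\widetilde\chi$ of small degree for which $\widetilde\chi(1)/p\notin\cd(S)$. Two natural detection routes are available: either $\widetilde\chi(1)/p$ fails to be a positive integer, in which case it cannot be a character degree, or $\widetilde\chi(1)/p$ is strictly smaller than the smallest nontrivial degree of $S$ supplied by the Landazuri--Seitz and Tiep--Zalesski lower bounds. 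If either applies we are done for that $(S,p)$.

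Concretely, for $S=A_n$ I would take $\widetilde\chi$ to be a basic spin character of $2.A_n$, whose degree is a power of $2$ of size roughly $2^{n/2}$; hook-length arithmetic on partitions of $n$ shows this power of $2$ does not appear in $\cd(A_n)$ for $n$ large, and the small cases, together with the extra prime $p=3$ at $n=6,7$, are dispatched from the \textsc{Atlas}. For $S$ sporadic the statement is a finite prime-by-prime \textsc{Atlas} check. For $S$ of Lie type of defining characteristic $\ell$ one splits the primes $p\mid|M(S)|$ into the ``generic'' factor (divisors of $\gcd(n,q\pm1)$ and the like) and the finite list of exceptional multipliers; in the generic case the candidate $\widetilde\chi$ is a minimal faithful complex representation of the relevant cover, namely a Weil character for $\SL_n(q)$, $\SU_n(q)$ or $\Sp_{2n}(q)$, and a spin character for the spin cover of $\Omega_{2n+1}(q)$. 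In each of these families the two detection routes resolve the comparison after a short case analysis on the arithmetic of $q$ and $n$.

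The main obstacle, and the reason the theorem excludes $S=P\Omega_{2n}^\pm(q)$ with $n\ge 4$ and $q$ odd, is that for those $S$ there are several nonisomorphic quasisimple covers with centers of order $2$ (arising from the larger Schur multiplier), and the natural minimal faithful characters of these covers --- the spin and half-spin characters --- come in matched families whose degrees already appear in $\cd(S)$, so neither detection route applies and a genuinely new idea would be needed. For the remaining classical groups the crucial input, kindly supplied by Tiep, is a sufficiently precise description of the degrees of faithful characters of $\SL_n(q)$, $\SU_n(q)$, $\Sp_{2n}(q)$ and the spin cover of $\Omega_{2n+1}(q)$ modulo their centers, together with sharp lower bounds on $\cd(S)$ that make the minimal-degree comparison work uniformly across the parameter ranges.
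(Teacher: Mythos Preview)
Your reduction of the ``consequently'' clause is correct, and your overall architecture matches the paper's: case split along the classification, with the alternating case handled via the basic spin character of $2.\Al_n$ (the paper cites \cite{BBOO01} rather than hook arithmetic, but the substance is the same), the sporadic groups via the \textsc{Atlas}, and the families $\PSL_n^\epsilon(q)$ and $\PSp_{2n}(q)$ via Weil characters together with the Landazuri--Seitz--Tiep--Zalesski minimal-degree bounds. So far the proposal is essentially the paper's proof.

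There are, however, two genuine gaps. First, you do not treat the exceptional Lie types at all: the groups $E_6(q)$, $\ta E_6(q)$, and $E_7(q)$ have generic Schur multipliers of order $3$, $3$, and $2$ respectively, and these do \emph{not} fall under your ``Weil or spin'' scheme. The paper handles them by locating an explicit semisimple element $s$ of order $|\bZ(S_{sc})|$ in $S_{ad}\setminus S_{ad}'$, computing the semisimple character degree $\chi_{(s)}(1)=[S_{ad}:\bC_{S_{ad}}(s)]_{p'}$ from L\"ubeck's centralizer data, and then checking against L\"ubeck's degree tables that neither $\chi_{(s)}(1)$ nor $\chi_{(s)}(1)/|\bZ(S_{sc})|$ appears in $\cd(S_{ad})$. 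Neither of your two detection routes applies here: the quotient is an integer, and it is far above the minimal nontrivial degree.

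Second, your treatment of $\Omega_{2n+1}(q)$ with $q$ odd is too optimistic. There is no standard ``spin character'' of $Spin_{2n+1}(q)$ playing the role that Weil characters play for the other classical families, and the two detection routes do not resolve this case. The paper instead works in the dual group $G^\ast=\mathrm{PCSp}_{2n}(q)$: it picks a specific involution $t_\epsilon\in G^\ast\setminus\PSp_{2n}(q)$ with $\bO^{p'}(\bC_{G^\ast}(t_\epsilon))=\SL_n^\epsilon(q)/C_{(n,2)}$, so that the associated semisimple character of $Spin_{2n+1}(q)$ has degree $D/2$ with $D=\prod_{i=1}^n(q^{2i}-1)/\prod_{i=1}^n(q^i-\epsilon^i)$, and then proves $D/4\notin\cd(S)$ by a nontrivial centralizer analysis (bounding unipotent $p'$-degree characters of $\bC_{G^\ast}(s)$ by $2$, then ruling out the resulting arithmetic constraints on $|\bC_{\Sp_{2n}(q)}(s)|_{p'}$ via Zsigmondy primes). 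This is the hard case of the theorem and cannot be dispatched by the minimal-degree comparison you propose. Finally, your explanation for the exclusion of $P\Omega_{2n}^\pm(q)$ with $q$ odd is not the paper's reason: the paper points to the Schur multiplier having order $4$ (so $G$ is not the full cover and Lemma~\ref{lem:restriction-center} no longer suffices to produce faithful characters) and to the difficulty of controlling centralizers in $\mathrm{P}(\mathrm{CO}_{2n}^\pm(q)^0)$, rather than to any coincidence between spin-character degrees and $\cd(S)$.
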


We expect Theorem \ref{thm:E} to also hold for even-dimensional
orthogonal groups, but the proof appears to be much more
complicated, see Section~\ref{sec:2} for further comment on this.

The proofs of Theorem~\ref{thm:E} and \ref{thm:D} are respectively
given in Sections~\ref{sec:2} and \ref{sec:4}. Section~\ref{sec:3}
shows that Conjecture~\ref{conj:hup} follows from
Conjecture~\ref{conj:C}. Finally, Section~\ref{sec:cohomo} discusses
some reductions of Conjecture~\ref{conj:C} to a problem on
$p$-groups or a problem on group cohomology.

We conclude this Introduction by noting that we announced the main
results of this paper in Remark 1 of \cite{hmt}. Since then, there
has been a number of preprints dealing with the codegree isomorphism
conjecture for some families of the groups mentioned in
Theorem~\ref{thm:D}, using the results in \cite{hmt}. Nevertheless,
the approach in this paper remains original and independent of
previous papers on the codegree isomorphism conjecture, except for
\cite{hmt}. Given that this paper strongly relies on the complex
Theorem~B of \cite{hmt}, we considered it appropriate to have that
paper reviewed before publishing this one. We believe that the
cohomological approach discussed in Section~\ref{sec:cohomo} is
perhaps the best way to address most of the remaining cases.



\section{Character degrees of quasisimple groups}\label{sec:2}

The aim of this section is to prove Theorem~\ref{thm:E}. That is, if
$S$ be a finite non-abelian simple group and $G$ a quasi-simple
group with center $\bZ(G)$ of prime order such that $G/\bZ(G)\cong
S$, then there exists a faithful character $\chi\in\irr(G)$ such
that
\begin{equation}\label{eq:einstein}
\chi(1)/|\bZ(G)|\notin\cd(S),
\end{equation}
Here, as usual, $\cd(S)$ denotes the set of the degrees of
irreducible characters of $S$.

We begin with the easy case of alternating and sporadic groups.

\begin{prop}\label{prop:quasi-alternating}
Theorem \ref{thm:E} holds when $S$ is an alternating group or a
sporadic simple group.
\end{prop}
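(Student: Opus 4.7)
My plan is to handle the sporadic and alternating cases separately, exploiting the ATLAS in the former and the theory of spin representations in the latter.

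For sporadic simple groups the verification is finite. For each of the $26$ sporadic $S$, the Schur multiplier $M(S)$ is tabulated in the ATLAS and has only small prime divisors. For each prime $p \mid |M(S)|$ there is a unique quasisimple central extension $G$ of $S$ with $|\bZ(G)| = p$, and its character table is also in the ATLAS. I would verify the conclusion for each such pair by direct inspection: pick a faithful $\chi \in \irr(G)$ and check that $\chi(1)/p \notin \cd(S)$. Taking $\chi$ of smallest faithful degree works in almost every case, since then $\chi(1)/p$ is much smaller than the smallest non-trivial member of $\cd(S)$, and the verification is immediate.

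For alternating groups, write $S = A_n$ with $n \geq 5$. The Schur multiplier of $A_n$ is $\Z/2\Z$ unless $n \in \{6, 7\}$, where it is $\Z/6\Z$. Hence the quasisimple covers with center of prime order are $2.A_n$ for $n \geq 5$ together with $3.A_6$ and $3.A_7$; the two triple covers I would dispatch using the ATLAS. For $2.A_n$, the faithful irreducible characters are the spin characters, parameterised by strict partitions of $n$. I would take $\chi$ to be a faithful character of $2.A_n$ of smallest degree, which arises from the basic spin character of $\tilde S_n$ (of degree $2^{\lfloor (n-1)/2 \rfloor}$): after accounting for the associate/self-associate distinction, this gives $\chi(1) = 2^{(n-2)/2}$ if $n$ is even and $\chi(1) = 2^{(n-3)/2}$ if $n$ is odd, so in either case $\chi(1)/2$ is a power of $2$.

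The task then reduces to showing $\chi(1)/2 \notin \cd(A_n)$. For the bulk of $n$, this follows from the classification of prime-power character degrees of $S_n$ due to Balog--Bessenrodt--Olsson--Ono: for $n$ sufficiently large, the only $2$-power character degrees of $S_n$ (and hence of $A_n$, up to a factor of $2$) are $1$ and, when $n - 1$ is itself a power of $2$, the value $n - 1$. A simple numerical comparison rules out equality of $\chi(1)/2$ with either possibility for $n \geq 6$. For $n = 5$ the basic spin fails since $\chi(1)/2 = 1 \in \cd(A_5)$, and I would instead use a spin character of $2.A_5$ of degree $4$, whose half $2$ lies outside $\cd(A_5) = \{1, 3, 4, 5\}$. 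I expect the main technical obstacle to be packaging these small-$n$ exceptions cleanly and ruling out any unwelcome coincidence between $\chi(1)/2$ and the exceptional $2$-power degree $n-1$ at values of $n$ with $n - 1$ itself a $2$-power (such as $n = 9$, where $n - 1 = 8$).
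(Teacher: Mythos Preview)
Your proposal is correct and follows essentially the same approach as the paper: sporadic groups and small alternating groups are dispatched by ATLAS inspection, and for larger $A_n$ one takes the basic spin character of $\widehat{A}_n$ of degree $2^{\lfloor(n-2)/2\rfloor}$ and invokes the Balog--Bessenrodt--Olsson--Ono classification of prime-power degrees of $A_n$ to see that $\chi(1)/2\notin\cd(A_n)$. The paper simply puts the ATLAS cutoff at $n\leq 9$ rather than handling $n=5$ separately, and the coincidence you worried about never occurs, since $2^{\lfloor(n-2)/2\rfloor-1}=n-1$ has no solutions.
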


\begin{proof}
When $S$ is a sporadic group or an alternating group of degree at
most $9$, the result can be verified using \cite{Conway}.

Assume that $S=\Al_n$ for $n\geq 10$. In particular, $S$ has a
unique nontrivial perfect central cover, which is the Schur double
cover, denoted by $\widehat{\Al}_n$. Assume, to the contrary, that
$\chi(1)/|\bZ(G)|\in\cd(S)$ for every faithful character
$\chi\in\irr(G)$.

Note that the so-called basic representation of $\widehat{\Al}_n$,
the one labeled by the partition $(n)$, has the degree $2^{\lfloor
(n-2)/2\rfloor}$ (see \cite[p.1774]{KT12}, for instance). Therefore,
we obtain that $2^{\lfloor (n-2)/2\rfloor-1}$ would be a character
degree of $\Al_n$. This is impossible, because the only possible
prime-power character degree of $\Al_n$ is $n-1$ (by
\cite[Theorem~5.1]{BBOO01}), but the equation $2^{\lfloor
(n-2)/2\rfloor-1}=n-1$ returns no solutions.
\end{proof}

The proof of Theorem \ref{thm:E} for groups of Lie type relies on
Lusztig's classification of their irreducible characters. We briefly
recall some basics that will be needed in the remaining of this
section, and refer the reader to \cite{Carter85,DM} for the full
account of the theory.

Let $G=\bG^F$ be the group of fixed points under a Frobenius
endomorphism $F$ of a connected reductive algebraic group $\bG$ over
an algebraically closed field of characteristic $p>0$. Let
$(\bG^\ast,F^\ast)$ be the pair in duality with $(\bG,F)$ and let
$G^\ast:={\bG^\ast}^{F^\ast}$. The set $\Irr(G)$ of irreducible
characters of $G$ is partitioned into subsets known as the
(rational) Lusztig series $\EC(G,s)$ associated to various
$G^\ast$-conjugacy classes of semisimple elements $s\in G^\ast$. The
series $\EC(G,s)$ is defined to be the set of irreducible characters
of $G$ occurring in some Deligne-Lusztig character
$R_{\bT}^{\bG}\theta$, where $\bT$ is an $F$-stable maximal torus of
$\bG$ and $\theta\in\Irr(\bT^F)$ such that the geometric conjugacy
class of $(\bT,\theta)$ corresponds to the $G^\ast$-conjugacy class
containing $s$.

According to \cite[Theorem 13.23 and Remark 13.24]{DM}, for each
semisimple element $s\in G^\ast$, there is a bijection
$\chi\leftrightarrow \psi$ between $\EC(G,s)$ and
$\EC(\bC_{G^*}(s),1)$ -- the set of the unipotent characters of the
centralizer $\bC_{G^*}(s)$, such that
\begin{equation}\label{eq:degree-formula}
\chi(1)=|{G^*}:\bC_{G^*}(s)|_{p'}\psi(1).
\end{equation}
Remark that, when $\bC_{G^*}(s)$ is disconnected, unipotent
characters of $\bC_{G^*}(s)$, by definition, are precisely those
characters in $\Irr(\bC_{G^*}(s))$ lying over unipotent characters
of the connected component $(\bC_{\bG^*}(s)^0)^{F^*}$. Once a
bijection is fixed, we will say that $(s^{G^*},\psi)$ is the Jordan
decomposition of $\chi$.

In certain cases, the desired character $\chi$
satisfying\eqref{eq:einstein} can be chosen to be a semisimple
character (see \cite[Definition 2.6.9]{GM20}). Every Lusztig series
$\EC(G,s)$ contains at least one semisimple character. Their degrees
are the same and given by
\[
|{G^*}:\bC_{G^*}(s)|_{p'}.
\]

We record the following well-known lemma that will be useful in
producing faithful irreducible characters of finite groups of Lie
type.

\begin{lem}\label{lem:restriction-center}
Assume the above notation. Let $s\in G^\ast$ be a semisimple
element. Assume that $|\bZ(G)|=|G^\ast/(G^\ast)'|$. Then all
characters in $\EC(G,s)$ restricts trivially to $\bZ(G)$ if and only
if $s\in (G^\ast)'$. More precisely,
\begin{enumerate}
\item if $s\in (G^\ast)'$, then all characters in $\EC(G,s)$ lie over the trivial character of
$\bZ(G)$, and

\item if $s\notin (G^\ast)'$, then all characters in $\EC(G,s)$ lie
over the same nontrivial character of $\bZ(G)$.
\end{enumerate}
\end{lem}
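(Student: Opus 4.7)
The plan is to reduce the lemma to a standard description of how $\bZ(G)$ acts on Lusztig series. The key input I would use is the well-known fact that every irreducible character $\chi$ in a given rational Lusztig series $\EC(G,s)$ has the same central character on $\bZ(G)$, and this common central character equals a single linear character $\hat s\in\Irr(\bZ(G))$ attached to the class of $s$; equivalently, $\chi|_{\bZ(G)}=\chi(1)\,\hat s$ for every $\chi\in\EC(G,s)$. This appears in standard references such as \cite{DM} (and in Bonnaf\'e's treatment of non-connected center), and ultimately comes from the compatibility of Deligne--Lusztig induction with twisting by linear characters of $G$, together with the identification of such linear characters via duality.

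I would then invoke the companion fact that the assignment $s\mapsto\hat s$ is a group homomorphism $G^*\to\Irr(\bZ(G))$ whose kernel contains $(G^*)'$, so that it descends to an injective homomorphism $G^*/(G^*)'\hookrightarrow\Irr(\bZ(G))$. This injection is another standard manifestation of the duality between simply connected and adjoint type. With it in hand, the hypothesis $|\bZ(G)|=|G^*/(G^*)'|$ forces the injection to be an isomorphism, and hence $\hat s=1_{\bZ(G)}$ if and only if $s\in(G^*)'$. Parts (1) and (2) then follow at once: if $s\in(G^*)'$ then every $\chi\in\EC(G,s)$ contains $\bZ(G)$ in its kernel, while if $s\notin(G^*)'$ then every such $\chi$ lies over the single nontrivial character $\hat s$, in both cases \emph{the same} character of $\bZ(G)$ for the whole series.

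The main obstacle is essentially bibliographical: locating the injection $G^*/(G^*)'\hookrightarrow\Irr(\bZ(G))$ in the precise generality needed, since several references phrase the underlying duality using $\bZ(\bG^*)^{F^*}$ or its $F^*$-coinvariants rather than the abelianization $G^*/(G^*)'$ itself. Under the equal-order hypothesis built into the statement of the lemma, however, these auxiliary quotients necessarily collapse onto $G^*/(G^*)'$, so no case-by-case analysis on the Dynkin type or isogeny class of $\bG$ is required, and the argument is uniform.
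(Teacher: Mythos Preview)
Your proposal is correct and follows essentially the same approach as the paper. The paper's own proof consists entirely of three citations---to \cite[Lemma~4.4(ii)]{NT13}, \cite[Lemma~5.8(ii)]{Hung24}, and \cite[Lemma~2.2]{Malle07}---which respectively encode exactly the ingredients you outline: the constancy of the central character across a Lusztig series and the identification of $\hat s$ with the image of $s$ under the duality isomorphism $G^*/(G^*)'\cong\Irr(\bZ(G))$; you have simply unpacked the mechanism rather than pointing to the literature.
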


\begin{proof}
See \cite[Lemma 4.4(ii)]{NT13} for the `if' implication, and
\cite[Lemma~5.8(ii)]{Hung24} for the other implication. The fact
that all the characters in one Lusztig series lie over the same
character of $\bZ(G)$ is from \cite[Lemma~2.2]{Malle07}.
\end{proof}

\begin{prop}\label{prop:quasi2}
Theorem \ref{thm:E} holds when $S$ is an exceptional group of Lie
type.
\end{prop}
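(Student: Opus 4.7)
The plan is to enumerate the pairs $(G,S)$ where $S$ is an exceptional simple group of Lie type and $|\bZ(G)|$ is a prime dividing the Schur multiplier of $S$, and in each case to exhibit a faithful $\chi\in\Irr(G)$ with $\chi(1)/|\bZ(G)|\notin\cd(S)$. Inspection of the Schur multipliers of the exceptional simple groups splits the work into two parts: the generic infinite families $E_6(q)$ with $3\mid q-1$, ${}^2E_6(q)$ with $3\mid q+1$, and $E_7(q)$ with $q$ odd; together with the short list of sporadic covers of ${}^2B_2(8)$, $G_2(3)$, $G_2(4)$, $F_4(2)$, and ${}^2E_6(2)$.

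For the sporadic covers, the relevant character tables are available in \cite{Conway}, and one readily locates a faithful $\chi\in\Irr(G)$ whose degree divided by $|\bZ(G)|$ is not in $\cd(S)$ by direct inspection.

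For the generic families, my plan is to invoke Lusztig's classification. Let $\bG$ be simply connected of the appropriate type, $G=\bG^F$, and $\bG^*$ its adjoint dual with $G^*=(\bG^*)^{F^*}$. Since $\bG$ is simply connected, $|\bZ(G)|=|G^*/(G^*)'|$ (equal to $3$ for $E_6$ and ${}^2E_6$, and to $2$ for $E_7$), and by Lemma~\ref{lem:restriction-center} every character in a Lusztig series $\EC(G,s)$ with $s\notin(G^*)'$ is faithful on $\bZ(G)$, hence on $G$ since $|\bZ(G)|$ is prime and $G$ is quasisimple. The strategy is to choose a semisimple $s\in G^*\setminus(G^*)'$ whose centralizer $\bC_{G^*}(s)$ is a conveniently chosen maximal torus (for instance a Coxeter torus or a torus associated to a specific cyclotomic polynomial $\Phi_d(q)$), and to take $\chi$ to be the corresponding semisimple character, whose degree is $|G^*:\bC_{G^*}(s)|_{p'}$, an explicit polynomial in $q$ after dividing by $|\bZ(G)|$.

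The main obstacle is verifying that this polynomial expression does not lie in $\cd(S)$. Characters of $S$ correspond to non-faithful characters of $G$ and hence to Lusztig series $\EC(G,t)$ with $t\in(G^*)'$; their degrees are of the form $|G^*:\bC_{G^*}(t)|_{p'}\psi(1)$ for $\psi$ a unipotent character of $\bC_{G^*}(t)$. My plan is to rule out equality by a cyclotomic factorization argument: choose $s$ so that $\chi(1)/|\bZ(G)|$ carries a specific $\Phi_d(q)$ to a power that cannot occur in any such non-faithful degree, exploiting that the isogeny constraint $t\in(G^*)'$ restricts the allowable centralizer types (and hence the available unipotent degrees) in Lusztig's classification. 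A finite list of exceptionally small values of $q$ where the generic argument degenerates would be settled by direct computation using \cite{Conway} or \texttt{GAP}.
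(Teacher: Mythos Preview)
Your high-level strategy matches the paper's: dispose of the exceptional Schur covers via \cite{Conway}, and for the generic families $E_6(q)$, ${}^2E_6(q)$, $E_7(q)$ invoke Lusztig theory together with Lemma~\ref{lem:restriction-center} to produce a faithful semisimple character from some $s\in G^\ast\setminus(G^\ast)'$. The divergence is in the choice of $s$ and in how you verify $\chi(1)/|\bZ(G)|\notin\cd(S)$.

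The paper does \emph{not} take $s$ regular in a maximal torus. Instead it chooses $s$ of order $|\bZ(G)|\in\{2,3\}$, so that $\bC_{G^\ast}(s)$ is a reductive group of positive semisimple rank (type ${}^2A_7$ or $A_7$ inside $E_7$; type $A_2$ over $\F_{q^3}$ inside $E_6$, and its twisted analogue inside ${}^2E_6$). These classes and centralizers are read off from L\"ubeck's tables \cite{Lubeck}, and the verification that neither $\chi_{(s)}(1)$ nor $\chi_{(s)}(1)/|\bZ(G)|$ lies in $\cd(S_{ad})$ is then a direct lookup in the complete degree lists of \cite{Lubeck2}. No cyclotomic argument is attempted.

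Your toral route is plausible but has two unaddressed points. First, whether a prescribed $F$-class of maximal tori in the adjoint group $G^\ast$ actually meets $G^\ast\setminus(G^\ast)'$ depends on how the Frobenius twist acts on the cocenter; you assert the existence of a regular $s\notin(G^\ast)'$ in your chosen torus without checking this. Second, the verification mechanism you propose does not obviously work: for \emph{regular} semisimple $t\in(G^\ast)'$ the centralizer is again a maximal torus of the same order, so the very degree $|G^\ast:T^\ast|_{p'}$ already lies in $\cd(S)$, and the constraint $t\in(G^\ast)'$ does not by itself exclude any cyclotomic pattern that would let you distinguish $|G^\ast:T^\ast|_{p'}/|\bZ(G)|$ from all degrees of $S$. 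In practice one is forced back to comparing against an explicit list, which is precisely what the paper does via \cite{Lubeck2} and what makes its choice of a low-order $s$ with a well-tabulated centralizer the more efficient one.
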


\begin{proof}
We can ignore the case where $S$ has trivial Schur multiplier. When
$S$ is one of the groups $\{G_2(3), G_2(4), {}^2E_6(2), Suz(8)\}$,
the result can be checked using \cite{Conway}.

It remains to consider \[S=E_6(q),{}^2E_6(q), \text{ or } E_7(q),\]
where \[3 \mid (q-1), 3\mid (q+1), \text{ or } 2\mid (q-1),\]
respectively. The only possible nontrivial cover of $S$ is then the
universal cover, which turns out to be the corresponding finite
reductive group of simply connected type. (Note that ${}^2E_6(2)$ is
already treated above and excluded here as its Schur multiplier is
exceptional and isomorphic to product of cyclic groups of order 2,2,
and 3.) We denote these quasisimple groups by the usual notation
\[S_{sc}=E_6(q)_{sc}, {}^2E_6(q)_{sc}, \text{ or } E_7(q)_{sc}.\]

Note that the dual group of $S_{sc}$ is the corresponding group of
adjoint type, denoted by $S_{ad}$. By
Lemma~\ref{lem:restriction-center}, if $s$ is chosen to be outside
the commutator subgroup $S_{ad}'$ of $S_{ad}$, every character
$\chi$ in the series $\mathcal{E}(S_{sc},(s))$ lie over a nontrivial
character of $\bZ(S_{sc})$, and hence is faithful. Certainly, in our
situation, \[|S_{ad}: S_{ad}'|=|\bZ(S_{sc})|\in\{2,3\},\] and we may
choose the desired semisimple element $s$ of order $|\bZ(S_{sc})|$.
In such case, we can appeal to L\"{u}beck's webpage \cite{Lubeck}
for the information on conjugacy classes of elements of order 2 or 3
in exceptional groups of Lie type and the structure of their
centralizers. Once $s$ is chosen, we compute the degree of a
semisimple character $\chi_{(s)}$ in the series
$\mathcal{E}(S_{sc},s)$ (note that there may be more than one such a
character) by the formula
\[\chi_{(s)}(1)=|S_{ad}:\bC_{S_{ad}}(s)|_{p'},\]
where $p$ is the defining characteristic of $S$. Using
\cite{Lubeck2}, one can check that both $\chi_{(s)}(1)$ and
$\chi_{(s)}(1)/|\bZ(S_{sc})|$ are not members of $\cd(S_{ad})$,
which shows that the latter is not a member of $\cd(S)$, as wanted
as in \eqref{eq:einstein}.

We provide complete arguments in the case $S=E_7(q)$, where $q=p^n$
is some odd prime power, and skip the details in the remaining
cases. We will use cyclotomic polynomials $\Phi_i:=\Phi_i(q)$ to
express the orders of these centralizers. First assume that $q\equiv
1(\bmod\,4)$. Then $E_7(q)_{ad}$ has one involution class $(s)$
outside $E_7(q)$ with centralizer ${}^2A_7(q).2$ of order
\[|\bC_{E_7(q)_{ad}}(s)|=2 q^{28} \Phi_1^4 \Phi_2^7 \Phi_3 \Phi_4^2
\Phi_6^2 \Phi_8 \Phi_{10} \Phi_{14}.\] A semisimple character
$\chi_{(s)}$ associated to $(s)$ of $G=E_7(q)_{sc}$ then has the
degree
\[
\chi_{(s)}(1)=[E_7(q)_{ad}:\bC_{E_7(q)_{ad}}(s)]_{p'}=
\frac{1}{2}\Phi_1^3\Phi_3^2\Phi_5\Phi_6\Phi_7\Phi_9\Phi_{12}\Phi_{18}.
\]
It can be checked that neither $\chi_{(s)}(1)$ nor $\chi_{(s)}(1)/2$
is a character degree of $E_7(q)_{ad}$, so that
$\chi_{(s)}(1)/2\notin\cd(E_7(q))$. Our desired character $\chi$ as
in (\ref{eq:einstein}) now can be chosen to be $\chi_{(s)}$. For the
other case $q\equiv -1(\bmod\,4)$, $E_7(q)_{ad}$ has one involution
class outside $E_7(q)$ with centralizer $A_7(q).2$ and similar
arguments apply.

When $S=E_6(q)$ with $3\mid (q-1)$, the character $\chi$ can be
taken to be a semisimple character associated to a class $(s)\in
E_6(q)_{ad} - E_6(q)$ of elements of order $3$ with centralizer
$A_2(q^3).3$. Finally, when $H={}^2E_6(q)$ with $q>2$ and $3\mid
(q+1)$, $\chi$ can be chosen to be a semisimple character associated
to a class of elements of order $3$ with centralizer
${}^2A_2(q^3).3$.
\end{proof}

In what follows, $\epsilon\in\{\pm\}$ or $\{\pm 1\}$, depending on
the context. Also, $\PSL_n^{+}(q)$ stands for the linear groups and
$\PSU_n^{-}(q)$ stands for the unitary groups.

\begin{prop}\label{prop:quasi3}
Theorem \ref{thm:E} holds when $S=\PSL_n^\epsilon(q)$ with $n\geq 2$
or $\PSp_{2n}(q)$ with $n\geq 2$.
\end{prop}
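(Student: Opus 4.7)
The plan is to extend the template of Proposition~\ref{prop:quasi2} for exceptional groups: for each quasisimple cover $G$ of $S$ with $|\bZ(G)|=r$ prime, I would exhibit a faithful character $\chi\in\Irr(G)$---a semisimple character of the universal cover $\widetilde{G}$ in most cases, or a Weil character in low rank---whose degree divided by $r$ is either a non-integer or a positive integer strictly between $1$ and the minimum non-trivial character degree of $S$. Since $S$ admits no character degree in that interval, this will force $\chi(1)/r\notin\cd(S)$. Faithfulness of $\chi$ on $\bZ(G)$ is arranged, via Lemma~\ref{lem:restriction-center}, by choosing the Jordan parameter $s$ in an appropriate coset of the derived subgroup of the dual group.

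For $S=\PSL_n^\epsilon(q)$ with $n\geq 3$, I would take $\widetilde{G}=\SL_n^\epsilon(q)$ with dual $\widetilde{G}^\ast=\mathrm{PGL}_n^\epsilon(q)$, and let $s\in\widetilde{G}^\ast$ be the image of $\diag(\alpha,1,\ldots,1)$, where $\alpha$ is chosen so that its class in $\widetilde{G}^\ast/(\widetilde{G}^\ast)'\cong\Z/\gcd(n,q-\epsilon)\Z$ has order exactly $r$. Such an $\alpha$ exists because $r\mid\gcd(n,q-\epsilon)$, and Lemma~\ref{lem:restriction-center} then guarantees that the associated semisimple character $\chi_{(s)}$ has central character of order $r$ on $\bZ(\widetilde{G})$, hence descends to a faithful character of $G$. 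From the centralizer $\GL_1^\epsilon(q)\times\GL_{n-1}^\epsilon(q)$ of $s$, one computes
\[
\chi_{(s)}(1)=|\widetilde{G}^\ast:\bC_{\widetilde{G}^\ast}(s)|_{p'}=\frac{q^n-\epsilon^n}{q-\epsilon}.
\]
Using $q\equiv\epsilon\pmod{r}$ and $r\mid n$ one checks $r\mid\chi_{(s)}(1)$, and a direct arithmetic comparison then gives
\[
\frac{\chi_{(s)}(1)}{r}=\frac{q^n-\epsilon^n}{r(q-\epsilon)}<\frac{q(q^{n-1}-\epsilon^{n-1})}{q-\epsilon},
\]
the right-hand side being the minimum non-trivial character degree of $\PSL_n^\epsilon(q)$ by the theorem of Tiep--Zalesski; this proves $\chi_{(s)}(1)/r\notin\cd(S)$.

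For $S=\PSL_2(q)$ or $S=\PSp_{2n}(q)$ with $q$ odd, I would instead take $\chi$ to be one of the two Weil representations of $\SL_2(q)$ or $\Sp_{2n}(q)$, which has degree $(q^n\pm 1)/2$ (with $n=1$ in the linear case) and restricts to $-1$ on the center of order $2$. Then $\chi(1)/2=(q^n\pm 1)/4$: one of the two values is a non-integer, while the other is a positive integer strictly below the minimum non-trivial character degree of $S$---namely $(q-\epsilon)/2$ for $\PSL_2(q)$, and of order at least $(q^n-1)/2$ for $\PSp_{2n}(q)$ by the Tiep--Zalesski bounds---and hence not in $\cd(S)$. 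For $q$ even, $\Sp_{2n}(q)$ has trivial center and there is nothing to prove.

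The main obstacle lies in the finitely many small-parameter exceptions where the generic bounds degenerate or exceptional Schur multipliers enter---for instance, $\PSL_2(q)$ for $q\in\{4,5,9\}$ (which are absorbed by Proposition~\ref{prop:quasi-alternating} via the isomorphisms with alternating groups), the triple and sextuple covers of $\PSU_4(3)$ arising from its unusually large Schur multiplier, and small coincidences such as $\PSp_4(3)\cong\PSU_4(2)$. In each such case one must verify Theorem~\ref{thm:E} by direct consultation of the ATLAS character tables of the relevant quasisimple groups.
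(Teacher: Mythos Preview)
Your approach is essentially the same as the paper's: in both cases one exhibits, for the linear and unitary groups, a faithful irreducible character of $G$ of degree $(q^n-\epsilon^n)/(q-\epsilon)$ and checks via the Tiep--Zalesski minimal-degree bounds that this degree divided by $r$ lies strictly below the smallest nontrivial degree of $S$; and for $\PSp_{2n}(q)$ with $q$ odd one uses the Weil characters of degree $(q^n\pm 1)/2$ in exactly the way you describe. The only substantive difference is in how faithfulness is secured. The paper does not go through Lusztig series here at all: it simply cites the explicit Weil-character constructions of Tiep (linear) and Szechtman (unitary), which identify the kernel of the degree-$(q^n-\epsilon^n)/(q-\epsilon)$ Weil character as precisely the index-$r$ subgroup $C\leq\bZ(\SL_n^\epsilon(q))$, so the character is automatically faithful on $G=\SL_n^\epsilon(q)/C$. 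Your route via Lemma~\ref{lem:restriction-center} needs a refinement of that lemma---namely that the central character of $\chi\in\EC(\widetilde{G},s)$ has order \emph{exactly} equal to the order of $s(\widetilde{G}^\ast)'$ in $\widetilde{G}^\ast/(\widetilde{G}^\ast)'$, not merely that it is nontrivial---which is true and standard but not what the lemma literally says. On the bookkeeping side, your list of exceptional Schur multipliers omits $\PSL_3(4)$ and $\PSU_6(2)$, both of which the paper handles by direct inspection; you should add these to your Atlas checks.
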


\begin{proof}
We recall that $G$ is a quasisimple group such that $|\bZ(G)|$ is of
prime order, say $r$, and $G/\bZ(G)\cong S$. We wish to find a
faithful $\chi\in\Irr(G)$ such that $\chi(1)/r \notin \cd(S).$

Consider $S=\PSp_{2n}(q)$ with $n\geq 2$. We may assume that $q$ is
odd and $G=\Sp_{2n}(q)$, so that $r=2$. It is well known that $G$
has a faithful Weil character $\chi$ of degree either $(q^n+ 1)/2$
or $(q^n- 1)/2$. However, $\chi(1)/2=(q^n\pm 1)/4$ is not a
character degree of $S$, as the minimal nontrivial degree of a
projective irreducible character of $S$ is $(q^n-1)/2$ (see
\cite[Theorem~5.2]{Tiep-Zalesskii96}).

Consider $S=\PSL^\epsilon_n(q)$. As the alternating groups have been
treated in Proposition~\ref{prop:quasi-alternating} and the cases
$S=\PSL_3(4)$, $\PSU_4(2)$, $\PSU_4(3)$, or $\PSU_6(2)$ can be
verified directly, we may assume that $G$ is a quotient of
$\SL^\epsilon_n(q)$. In fact, if $|\bZ(G)|=r$, then the cyclic group
$\bZ(\SL^\epsilon_n(q))$, of order $\gcd(n,q-\epsilon)$, has a
unique subgroup $C$ of order $\gcd(n,q-\epsilon)/r$, and $G$ must be
isomorphic to $\SL^\epsilon_n(q)/C$. Now $\SL^\epsilon_n(q)$ has an
irreducible Weil character $\chi$ of degree
$\chi(1)=(q^n-\epsilon^n)/(q-\epsilon)$ whose kernel is precisely
$C$, see \cite{Tiep15} and \cite{Sze} for the character formulas of
the Weil characters of the linear and unitary groups, respectively.
So $\chi$ becomes a faithful character of $G$. An easy inspection
from \cite[Table II]{Tiep-Zalesskii96} shows that $\chi(1)/2$ is
smaller than the minimal nontrivial character degree of $S$, and so
$\chi(1)/r\notin \cd(S)$, as desired.
\end{proof}

We set up some notation for the proof of Theorem~\ref{thm:E} in the
case where $S$ is an odd-dimensional orthogonal group.

Let $\langle.|.\rangle$ be a non-degenerate symplectic form on the
space $V:=\mathbb{F}_q^{2n}$, where $q$ is odd. The \emph{conformal
symplectic group} $\mathrm{CSp}_{2n}(q)$ is defined to be
\[
\mathrm{CSp}_{2n}(q):=\{g \in \GL(V): \exists \,\tau(g)\in
\mathbb{F}^\ast_q, \langle gu|gv\rangle = \tau(g)\langle u|v\rangle
\,\forall\, u,v\in V\}.
\]
We have $\bZ(\mathrm{CSp}_{2n}(q))\cong C_{q-1}$ and the
\emph{projective conformal symplectic group} is
\[
\mathrm{PCSp}_{2n}(q):=
\mathrm{CSp}_{2n}(q)/\bZ(\mathrm{CSp}_{2n}(q)).
\]
This is indeed the type-$C_n$ finite reductive group of adjoint
type, see \cite[p. 40]{Carter85}. Set \[H:=\mathrm{CSp}_{2n}(q)
\text{ and } G^*:=\mathrm{PCSp}_{2n}(q).\] The group $G^*$ is dual
to \[G:=Spin_{2n+1}(q),\] the type-$B_n$ group of simply connected
type, see \cite[p.120]{Carter85}. Let $\bG^\ast$ and $F^*$ be
respectively a simple algebraic group and associated Frobenius map
defining $G^\ast$, so that ${\bG^\ast}^{F^*}=G^\ast$.

Now, let $s$ be a semisimple element in $G^*$. Consider a pre-image
of $s$, which we will denote by the same $s$, in $H$. From the
aforementioned Jordan decomposition, to control characters of $G$,
we wish to understand the centralizer \[C^*:=\bC_{G^*}(s).\]

Note that $\bC_{\bG^*}(s)$ is not always connected. Also, the
$F^*$-fixed points of the connected component $\bC_{\bG^*}(s)^0$,
which we denote by ${C^*}^0$, is precisely the image of $\bC_{H}(s)$
under the natural projection $\pi:H\rightarrow G^*$.

\begin{lem}\label{lem:index}
Assume the above notation. Then the index of ${C^*}^0$ in $C^*$ is
either 1 or 2.
\end{lem}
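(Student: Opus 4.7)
The plan is to deduce the claim from a standard fact about component groups of centralizers in reductive algebraic groups: for a semisimple element $s$ in a connected reductive group $\bG$, the order of $\bC_\bG(s)/\bC_\bG(s)^0$ is bounded by the degree of the isogeny from the simply connected cover. Here $\bG^*$ is the adjoint group of type $C_n$, whose simply connected cover is $\mathbf{Sp}_{2n}$, and the isogeny $\mathbf{Sp}_{2n}\to\mathbf{PCSp}_{2n}$ has kernel $\{\pm I\}$ of order $2$. So the component group $A(s):=\bC_{\bG^*}(s)/\bC_{\bG^*}(s)^0$ should satisfy $|A(s)|\mid 2$, and then a Lang-theorem argument transports this bound to the fixed points.

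First I would verify that $\mathbf{Sp}_{2n}\hookrightarrow\mathbf{CSp}_{2n}\twoheadrightarrow\mathbf{PCSp}_{2n}$ is surjective with kernel of order $2$: any $g\in\mathbf{CSp}_{2n}$ can be rescaled by a square root of $\tau(g)^{-1}$ (which exists in the algebraic closure) to land in $\mathbf{Sp}_{2n}$, and the intersection $\mathbf{Sp}_{2n}\cap Z(\mathbf{CSp}_{2n})$ equals $\{\pm I\}$. This identifies $\mathbf{Sp}_{2n}\to\bG^*$ as the simply connected isogeny for type $C_n$.

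Next, I would lift $s\in\bG^*$ to a semisimple element $\tilde s\in\mathbf{Sp}_{2n}$. Since $\mathbf{Sp}_{2n}$ is simply connected, Steinberg's theorem (see e.g.\ Digne--Michel) says that $\bC_{\mathbf{Sp}_{2n}}(\tilde s)$ is connected. Its image $M$ in $\bG^*$ is therefore a connected subgroup of $\bC_{\bG^*}(s)$, so $M\subseteq \bC_{\bG^*}(s)^0$. A straightforward computation using the kernel $\{\pm I\}$ then gives
\[
[\bC_{\bG^*}(s):\bC_{\bG^*}(s)^0]\;\leq\;[\bC_{\bG^*}(s):M]\;\leq\;2,
\]
so $|A(s)|\in\{1,2\}$.

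Finally, to transfer this bound to $F^*$-fixed points, apply $F^*$-invariants to the exact sequence $1\to \bC_{\bG^*}(s)^0\to \bC_{\bG^*}(s)\to A(s)\to 1$. Lang's theorem on the connected group $\bC_{\bG^*}(s)^0$ forces $H^1(F^*,\bC_{\bG^*}(s)^0)=1$, so the resulting long exact sequence collapses to
\[
1\longrightarrow {C^*}^0\longrightarrow C^*\longrightarrow A(s)^{F^*}\longrightarrow 1,
\]
whence $[C^*:{C^*}^0]=|A(s)^{F^*}|\leq|A(s)|\leq2$. There is no real obstacle here: this is essentially a bookkeeping lemma about isogenies and Lang--Steinberg, and the only point that needs care is the identification of $\mathbf{Sp}_{2n}\to\mathbf{PCSp}_{2n}$ as a degree-$2$ isogeny, which is immediate from the standard classification.
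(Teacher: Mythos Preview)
Your argument is correct. You invoke the general machinery of algebraic groups: Steinberg's connectedness theorem for centralizers in the simply connected cover $\mathbf{Sp}_{2n}$, the bound $|A(s)|\mid|\ker(\mathbf{Sp}_{2n}\to\bG^*)|=2$ on the component group, and then Lang--Steinberg to descend to $F^*$-fixed points. The paper, by contrast, stays entirely inside the finite group $H=\mathrm{CSp}_{2n}(q)$ and gives a two-line elementary calculation: if $\pi(g)\in C^*$ then $gsg^{-1}=\lambda s$ for some $\lambda\in\mathbb{F}_q^\times$, and applying $\tau$ to both sides of $\langle \lambda su\mid \lambda sv\rangle=\langle gsg^{-1}u\mid gsg^{-1}v\rangle$ yields $\lambda^2=1$; hence $\bC_H(s)$ has index at most $2$ in $\pi^{-1}(C^*)$, and the lemma follows from the identification ${C^*}^0=\pi(\bC_H(s))$ stated just before the lemma. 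Your approach is more conceptual and would transport unchanged to any adjoint group once the degree of the simply connected isogeny is known, while the paper's is shorter, self-contained, and avoids Steinberg's theorem and the Lang cohomology argument altogether by exploiting the concrete description of $\mathrm{CSp}_{2n}(q)$ via the form $\tau$.
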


\begin{proof}
Let $g\in H$ be such that its image $\pi(g)$ under $\pi$ belongs to
$C^*$; that is, $gsg^{-1}=\lambda s$ for some $\lambda \in
\mathbb{F}^*_q$. Then, for every $u,v\in V$,
\begin{align*}
\lambda^2\tau(s)\langle u|v\rangle=&\langle \lambda su|\lambda
sv\rangle=\langle
gsg^{-1}u|gsg^{-1}v\rangle\\
=&\tau(g)\tau(s)\tau(g^{-1})\langle u|v\rangle=\tau(s)\langle
u,v\rangle,
\end{align*}
and so $\lambda\in\{\pm 1\}$. Thus $\bC_{H}(s)$ has index 1 or 2 in
$\pi^{-1}(C^*)$, proving the lemma.
\end{proof}

\begin{remark}
Note that, for certain semisimple element $s\in G^*$, the case
$\lambda=-1$ in the proof of Lemma~\ref{lem:index} does not occur.
In such situation, $\bC_{\bG^*}(s)$ is connected and $\bC_H(s)$ is
the full pre-image of $C^*={C^*}^0$.
\end{remark}

\begin{lem}\label{lem:odd-dim-orthogonal}
Let $q$ be a power of an odd prime $p$. Let $G^*$ be the projective
conformal symplectic group $\mathrm{PCSp}_{2n}(q)$ with $n\geq 3$
and $s$ a semisimple element of $G^*$. Then the only possible
degrees prime to $p$ of a unipotent character of $\bC_{G^*}(s)$ are
1 and 2.
\end{lem}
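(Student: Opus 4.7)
The plan is to reduce the claim to a statement about finite classical groups and then invoke Lemma~\ref{lem:index}. Write $C^*:=\bC_{G^*}(s)$ and $C^{*0}:=(\bC_{\bG^*}(s)^0)^{F^*}$, so that $[C^*:C^{*0}]\leq 2$ by Lemma~\ref{lem:index}.

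First I would describe $C^{*0}$ explicitly. Since $\bG^*$ is the adjoint algebraic group of type $C_n$, the connected centralizer $\bC_{\bG^*}(s)^0$ is the reductive subgroup generated by a maximal torus through $s$ together with the root subgroups whose roots vanish on $s$. Decomposing a symplectic preimage of $s$ in $H=\mathrm{CSp}_{2n}(q)$ into generalized eigenspaces then shows that, after taking $F^*$-fixed points, $C^{*0}$ is (up to a central isogeny) a direct product of finite classical groups of the form $\GL_a(q^d)$, $\GU_a(q^d)$ and $\Sp_{2b}(q)$, together with a possibly non-trivial toral factor of order prime to $p$. In particular, no orthogonal factor appears.

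The core of the argument is then the claim that in every finite classical group in characteristic $p$, the only unipotent character of degree prime to $p$ is the trivial character, of degree~$1$. This follows from Lusztig's degree formula, which expresses the degree of a unipotent character labelled by a symbol $\lambda$ as $q^{a(\lambda)}R_\lambda(q)$, where $R_\lambda(q)$ is a product of cyclotomic polynomials in $q$ and $a(\lambda)\in\Z_{\geq 0}$ is the $a$-invariant of $\lambda$. Since $q$ is a power of $p$, the $p'$-degree condition forces $a(\lambda)=0$, and inspection of symbol combinatorics for types $A$, ${}^2A$ and $C$ shows that this happens exactly for the label of the trivial character. Because unipotent characters of a direct product are the external tensor products of unipotent characters of the factors, and tori contribute only the trivial unipotent character, the only unipotent character of $C^{*0}$ of $p'$-degree is the trivial character $1_{C^{*0}}$.

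Finally, by Clifford theory any unipotent character of $C^*$ with $p'$-degree must lie over $1_{C^{*0}}$, and any such character is the inflation of a linear character of the abelian quotient $C^*/C^{*0}$. Its degree therefore divides $[C^*:C^{*0}]\leq 2$, proving the lemma. The principal difficulty is the classification claim above: namely, pinning down that in each relevant classical type only the label of the trivial character has $a$-invariant zero. This is standard but has to be checked carefully; in practice one may simply quote the published tables of generic unipotent degrees, e.g.\ in \cite{Carter85}, rather than redoing the combinatorics from scratch.
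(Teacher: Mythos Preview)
Your argument is correct and follows essentially the same route as the paper: determine the shape of $C^{*0}$ as a product of symplectic and general linear/unitary factors, use that the only $p'$-degree unipotent character of each factor is the trivial one, and finish with Lemma~\ref{lem:index}. The paper obtains these two inputs by quoting \cite[Lemma~2.4]{Nguyen10} for the centralizer structure in $\Sp_{2n}(q)$ and \cite[Theorem~6.8]{Malle07} for the $p'$-degree unipotent classification, then transfers to $C^{*0}$ via \cite[Proposition~13.20]{DM}; your eigenspace description and $a$-invariant argument are valid alternatives, though you should make explicit (as the paper does through that transfer) that unipotent degrees are unchanged under the central isogeny you invoke.
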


\begin{proof}
As above, we denote a pre-image of $s$ under the projection $\pi$ by
the same notation $s$. By \cite[Lemma~2.4]{Nguyen10}, we have
$$\bC_{\mathrm{Sp}_{2n}(q)}(s)\cong \Sp_{2k}(q)\times \Sp_{2(m-k)}(q)
\times\prod_{i=1}^t\GL_{a_i}^{\alpha_i}(q^{k_i}),$$ where $0\leq
k\leq m\leq n$, $\alpha_i=\pm$, and $\sum_{i=1}^tk_ia_i=n-m$, if
$\tau(s)$ is a square in $\mathbb{F}_q$, and
$$\bC_{\mathrm{Sp}_{2n}(q)}(s)\cong \Sp_m(q^2)\times\prod_{i=1}^t\GL_{a_i}^{\alpha_i}(q^{k_i}),$$
where $m$ is even, $\alpha_i=\pm$, and $\sum_{i=1}^tk_ia_i=n-m$, if
$\tau(s)$ is not a square in $\mathbb{F}_q$. By
\cite[Theorem~6.8]{Malle07} and the assumption on $q$, the only
unipotent character of either $\Sp_{-}(q)$ or $\SL^\pm_{-}(q)$ of
degree prime to $p$ is the trivial character. It follows that, the
only unipotent characters of $\bC_{\mathrm{Sp}_{2n}(q)}(s)$ of
degree prime to $p$ are linear characters.

Let ${C^*}^0:={\bC_{\bG^*}(s)^0}^{F^*}$, as above. Using
\cite[Proposition~13.20]{DM} for the embedding
$f:\bC_{\mathrm{Sp}_{2n}(q)}(s)\hookrightarrow
\bC_{\mathrm{CSp}_{2n}(q)}(s)$ and the projection $\pi:
\bC_{\mathrm{CSp}_{2n}(q)}(s)\rightarrow {C^*}^0$, we know that the
degrees of unipotent characters of $\bC_{\mathrm{Sp}_{2n}(q)}(s)$
and ${C^*}^0$ are the same. Therefore, the only unipotent characters
of ${C^*}^0$ of degree prime to $p$ are linear characters.

The result now follows from Lemma~\ref{lem:index} that ${C^*}^0$ has
index 1 or 2 in $\bC_{G^*}(s)$.
\end{proof}

\begin{prop}\label{prop:quasi4}
Theorem \ref{thm:E} holds when $S=\Omega_{2n+1}(q)$ with $n\geq 3$.
\end{prop}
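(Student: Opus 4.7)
The plan is to adapt the Lusztig-series framework established just above the statement. Writing $G=\mathrm{Spin}_{2n+1}(q)$ (we may assume $q$ odd, since $\Omega_{2n+1}(2^f)\cong \Sp_{2n}(2^f)$ was handled in Proposition~\ref{prop:quasi3}), its dual is $G^*=\mathrm{PCSp}_{2n}(q)$ with $(G^*)'=\PSp_{2n}(q)$ of index $2$. By Lemma~\ref{lem:restriction-center}(2), every character in a Lusztig series $\EC(G,s)$ with $s\in G^*\setminus(G^*)'$ is faithful. I would pick an explicit involution $s\in G^*\setminus(G^*)'$, take $\chi_s\in\EC(G,s)$ to be the associated semisimple character of degree $\chi_s(1)=|G^*:\bC_{G^*}(s)|_{p'}$, and show that $\chi_s(1)/2\notin\cd(S)$.

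For $q\equiv 3\pmod 4$, a convenient lift is $\tilde{s}=\diag(I_n,-I_n)\in H=\mathrm{CSp}_{2n}(q)$: since $\tau(\tilde{s})=-1$ is a non-square, the image $s\in G^*$ lies outside $(G^*)'$. A direct block computation gives $\bC_H(\tilde{s})\cong \GL_n(q)\times\F_q^\times$, and the swap $\bigl(\begin{smallmatrix}0&I_n\\I_n&0\end{smallmatrix}\bigr)\in H$ conjugates $\tilde{s}$ to $-\tilde{s}$. Lemma~\ref{lem:index} then forces $|C^*:{C^*}^0|=2$ for $C^*=\bC_{G^*}(s)$, so with $|{C^*}^0|=|\bC_H(\tilde{s})|/|\bZ(H)|=|\GL_n(q)|$ one obtains $|C^*|_{p'}=2|\GL_n(q)|_{p'}$ and
\[
\chi_s(1)=\frac{|\Sp_{2n}(q)|_{p'}}{2\,|\GL_n(q)|_{p'}}=\tfrac{1}{2}\prod_{i=1}^n(q^i+1).
\]
An analogous twisted lift ($\tilde{s}^2=\nu I$ for a non-square $\nu$) treats $q\equiv 1\pmod 4$.

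Now suppose for contradiction that $\chi_s(1)/2\in\cd(S)$. Since $q$ is odd and $\chi_s(1)$ is prime to $p$, so is $\chi_s(1)/2$, making it a $p'$-degree of $S$. The dual $S^*=\Sp_{2n}(q)$ is simply connected, so semisimple centralizers in $S^*$ are connected, and the simply connected analog of Lemma~\ref{lem:odd-dim-orthogonal} (i.e., the same application of \cite[Theorem 6.8]{Malle07}) forces the associated $p'$-unipotent character of any such centralizer to be trivial. Hence every $p'$-degree of $S$ equals $|S^*:\bC_{S^*}(t)|_{p'}$ for some semisimple $t\in S^*$, and the supposition translates into
\[
|\bC_{S^*}(t)|_{p'}=4\prod_{i=1}^n(q^i-1).
\]
By Nguyen~\cite[Lemma 2.4]{Nguyen10}, however, $\bC_{\Sp_{2n}(q)}(t)$ is a direct product of factors of the shapes $\Sp_{2k}(q)$, $\Sp_m(q^2)$, and $\GL^\pm_a(q^b)$, and the $p'$-order of each factor is a monic polynomial in $q$; hence so is $|\bC_{S^*}(t)|_{p'}$. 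But the right-hand side has leading coefficient $4$, ruling out equality for all but finitely many $q$ at each fixed $n$; the remaining small cases are verified directly from character-table data.

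The main obstacle is the final step. The monic-versus-leading-coefficient-$4$ argument cleanly settles the generic regime, but a rigorous implementation must enumerate every centralizer shape in $\Sp_{2n}(q)$ of polynomial degree $n(n+1)/2$ (including the two-symplectic-factor configurations $\Sp_{2k}(q)\times\Sp_{2(n-k)}(q)$) and dispose of the small $(n,q)$ coincidences via cyclotomic-factor or $2$-adic valuation estimates. A secondary technical hurdle lies in correctly identifying $\bC_{G^*}(s)$ in the twisted regime $q\equiv 1\pmod 4$, where the $\F_{q^2}$-rational similitude data of $\tilde{s}$ require extra care in the centralizer and component-group computation.
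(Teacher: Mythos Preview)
Your plan matches the paper's: pick an involution $s\in G^*\setminus(G^*)'$, take the associated semisimple character $\chi_s$, and show $\chi_s(1)/2\notin\cd(S)$. (Your explicit involution for $q\equiv 3\pmod 4$ happens to lie in a different class from the paper's $t_\epsilon$, which has a $\GU_n(q)$-type rather than $\GL_n(q)$-type centralizer, but either choice is legitimate.) One framework correction is needed, however: the assertion ``the dual $S^*=\Sp_{2n}(q)$'' is not quite right, since $S=\Omega_{2n+1}(q)$ is not itself of the form $\bG^F$ and has no dual in the required sense. The paper instead identifies $\Irr(S)$ with the characters of $G=\mathrm{Spin}_{2n+1}(q)$ trivial on $\bZ(G)$; by Lemma~\ref{lem:restriction-center} these lie in $\EC(G,s)$ with $s\in\PSp_{2n}(q)\subset G^*=\mathrm{PCSp}_{2n}(q)$, and Lemma~\ref{lem:odd-dim-orthogonal} gives unipotent degree $\psi(1)\in\{1,2\}$ (not just $1$, since $\bC_{G^*}(s)$ may be disconnected). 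After passing to $\bC_{\Sp_{2n}(q)}(s)$ via Lemma~\ref{lem:index} one obtains $|\bC_{\Sp_{2n}(q)}(s)|_{p'}\in\{2,4,8\}\cdot|\GL^\pm_n(q)|_{p'}$, not the single value $4\prod(q^i-1)$ you derive. Your monic argument would absorb the extra constants, but the derivation should go through $\mathrm{PCSp}$, not a putative ``$S^*$''.

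The substantive gap is the one you yourself flag: the leading-coefficient comparison leaves, for each fixed $n$, finitely many exceptional $q$---hence infinitely many pairs $(n,q)$ over all $n$---to be checked individually, and ``character-table data'' does not exist for general $n$. The paper closes this uniformly with Zsigmondy primes. In the unitary case with $n$ odd, a primitive prime divisor of $q^{2n}-1$ must divide $|\bC_{\Sp_{2n}(q)}(s)|_{p'}$; tracking it through the possible direct factors $\Sp_{2k}(q)$, $\GL^{\pm}_{a_i}(q^{k_i})$ forces $\bC\cong\GU_{a}(q^{k})$ with $ak=n$ and no other factor, whence $|\bC|_{p'}\leq|\GU_n(q)|_{p'}<2|\GU_n(q)|_{p'}$, a contradiction. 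For $n$ even one instead uses a primitive prime divisor of $q^{2n-2}-1$ to pin down a rank-$(n-1)$ factor, leaving only a single rank-$1$ factor ($\Sp_2(q)$ or $\GL_1^\pm(q)$) which is easily seen not to produce the required multiple; the linear case is parallel. This Zsigmondy analysis is uniform in both $n$ and $q$ and is the missing idea in your sketch. (The paper also disposes separately of the exceptional covers of $\Omega_7(2)$ and $\Omega_7(3)$, which your reduction to $G=\mathrm{Spin}_{2n+1}(q)$ does not cover.)
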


\begin{proof}
The theorem can be easily checked for $\Omega_7(2)$ and
$\Omega_7(3)$, those groups with an exceptional Schur multiplier.
Also, $\Omega_{2n+1}(2^m)\cong \Sp_{2n}(2^m)$ is already considered
in Proposition~\ref{prop:quasi3}. We therefore may assume that
$G=Spin_{2n+1}(q)$ with $q$ odd, which is the (double) universal
cover of $S$. Let $G^\ast:=\mathrm{PCSp}_{2n}(q)$ and
$H:=\mathrm{CSp}_{2n}(q)$, as above. Also, let $\bG^\ast$ and $F^*$
be respectively a simple algebraic group and associated Frobenius
map defining $G^\ast$, so ${\bG^\ast}^{F^*}=G^\ast$.

Suppose that $q\equiv \epsilon\in\{\pm 1\} (\bmod\,4)$. According to
\cite[Table 4.5.1]{gls}, $G^\ast$ contains an
 involution, say $t_\epsilon$, that is not in
 $[G^\ast,G^\ast]=\mathrm{PSp}_{2n}(q)$. Furthermore, if
 $C^\ast:=\bC_{G^\ast}(t_\epsilon)$ and $L^\ast:=\bO^{p'}(C^\ast)$
 where $p$ is the defining characteristic of $S$, then
 \[ L^\ast:=\bO^{p'}(C^\ast)=\SL_n^\epsilon(q)/C_{(n,2)} \text{ and } \bC_{C^\ast}(L^\ast)=C_{q-\epsilon}.\]

Let $\bC:=\bC_{\bG^\ast}(t_\epsilon)$ and $\bC^0$ be its connected
components containing the identity element. We have $\bC/\bC^0\cong
C_2$ and $\bC^0$ is of type $A_{n-1}T_1$, according to
\cite[Table~4.3.1]{gls}. The $F$-fixed-point group
${C^\ast}^0:={\bC^0}^F$ is therefore isomorphic to $A_{n-1}.C_{q-1}$
when $\epsilon=1$ and $^2A_{n-1}.C_{q+1}$ when $\epsilon=-1$.
Moreover, $C^\ast/{C^\ast}^0\cong \bC/\bC^0\cong C_2$ (see
\cite[Theorem~4.2.2(i)]{gls}). All together, we have
\[|C^\ast|=2q^{\frac{1}{2}n(n-1)}\prod_{i=1}^n(q^i-\epsilon^i).\]
Now, the Lusztig series defined by $(t_\epsilon)$ contains a
(semisimple) character of degree $D/2$, where
\[D:=2[G^\ast:C^\ast]_{p'}=\frac{\prod_{i=1}^n (q^{2i}-1)}{\prod_{i=1}^n (q^i-\epsilon^i)}.\]
Note that, as $t_\epsilon$ does not belong to $[G^\ast,G^\ast]$,
this character is faithful, again by
Lemma~\ref{lem:restriction-center}.

We claim that $D/4$ is not a character degree of $S$, and this will
complete the proof. Suppose otherwise, i.e., there is $\chi\in
\Irr(S)$ with $\chi(1)=D/4$. Let $s\in G^*=\mathrm{PCSp}_{2n}(q)$ be
a semisimple element labeling the Lusztig series containing $\chi$.
In fact, \[s\in [G^*,G^*]=\PSp_{2n}(q),\] by
Lemma~\ref{lem:restriction-center}. Note that $D/4$ is coprime to
$p$. Formula \eqref{eq:degree-formula} implies that the degree of
the unipotent character $\psi$ of $\bC_{G^*}(s)$ in Lusztig's Jordan
decomposition is coprime to $p$ as well. By
Lemma~\ref{lem:odd-dim-orthogonal}, we have $\psi(1)=1$ or $2$. In
summary, we obtain
\[
\chi(1)=\frac{D}{4}=\alpha\cdot[G^\ast:\bC_{G^\ast}(s)]_{p'}
\]
for some $\alpha\in\{1,2\}$. It follows that
\[
|\bC_{G^\ast}(s)|_{p'}=4\alpha|\GL_n^\pm(q)|_{p'}.
\]

Again, for notational convenience, we will identify $s$ with a
pre-image of $s$ in $\Sp_{2n}(q)$. Observe that
\[|\bC_{\mathrm{PCSp}_{2n}(q)}(s)|=\frac{\kappa}{q-1}|\bC_{\mathrm{CSp}_{2n}(q)}(s)|\]
for some divisor $\kappa$ of $q-1$. In fact, Lemma~\ref{lem:index}
shows that $\kappa$ is a divisor of $2$.

Now,
$|\bC_{\mathrm{CSp}_{2n}(q)}(s)|=(q-1)|\bC_{\mathrm{Sp}_{2n}(q)}(s)|$,
by \cite[Lemma~2.4]{Nguyen10}. It follows that,
\[
|\bC_{\mathrm{Sp}_{2n}(q)}(s)|_{p'}=\frac{4\alpha}{\kappa}|\GL_n^\pm(q)|_{p'},
\]
and thus
\begin{equation}|\bC_{\mathrm{Sp}_{2n}(q)}(s)|_{p'}=\{2 \text { or }4 \text { or
}8\}|\GL_n^\pm(q)|_{p'}.\label{eq:1}\end{equation}

Recall from \cite[Lemma 2.4]{Nguyen10} that
$$\bC_{\mathrm{Sp}_{2n}(q)}(s)\cong \Sp_{2k}(q)\times \Sp_{2(m-k)}(q)
\times\prod_{i=1}^t\GL_{a_i}^{\alpha_i}(q^{k_i}),$$ for some $0\leq
k\leq m\leq n$, $\alpha_i=\pm$, and $\sum_{i=1}^tk_ia_i=n-m$.

Suppose first that the group on the right-hand side of (\ref{eq:1})
is unitary. Suppose furthermore that $n$ is odd. Zsigmondy's theorem
then implies that $t=1$, $k_1a_1=n$, and in fact
$\bC_{\mathrm{Sp}_{2n}(q)}(s)\cong \GU_{a_1}(q^{k_1})$, which in
turn implies that $|\bC_{\mathrm{Sp}_{2n}(q)}(s)|_{p'}\leq
|\GU_n(q)|_{p'}$, violating (\ref{eq:1}). Let $n$ be even. We see
that $q^{n-1}+1$ is a factor of
$|\bC_{\mathrm{Sp}_{2n}(q)}(s)|_{p'}$. Using a primitive prime
divisor of $q^{2n-2}-1$, we deduce that either $\Sp_{2n-2}(q)$ or
$\GU_{a_i}(q^{k_i})$ for some relevant $a_i$ and $k_i$ with $k_ia_i=
n-1$ must be a factor of $\bC_{\mathrm{Sp}_{2n}(q)}(s)$. (Note that
the case $\bC_{\mathrm{Sp}_{2n}(q)}(s)\cong \GU_{a_1}(q^{k_1})$ is
already eliminated.) In either case, there is only one other direct
factor of $\bC_{\mathrm{Sp}_{2n}(q)}(s)$, which must be either
$\Sp_2(q)$ or $\GL_1^\pm(q)$. It is now straightforward to check
that none of these cases can fulfill (\ref{eq:1}). The case of a
linear group in (\ref{eq:1}) is entirely similar.
\end{proof}

\begin{prop}\label{prop:quasi5}
Theorem \ref{thm:E} holds when $S=\Omega^\pm_{2n}(q)$ with $n\geq 4$
and $q$ even.
\end{prop}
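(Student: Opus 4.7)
\smallskip
\noindent\textbf{Proof proposal.}
My plan is to exploit the fact that in even characteristic the Schur multiplier of $\Omega^\pm_{2n}(q)$ with $n\geq 4$ is almost always trivial, so the statement of Theorem~\ref{thm:E} is vacuous except for one small case that can be handled directly with the ATLAS.

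As a first step, I would record the center of the simply connected algebraic-group cover: for type $D_n$ or ${}^2D_n$, the finite simply connected group $\mathrm{Spin}^\pm_{2n}(q)$ has center of order $\gcd(4,q^n\mp 1)$, and when $q$ is even this gcd equals $1$ because $q^n\mp 1$ is odd. By the standard classification of Schur multipliers of finite simple groups of Lie type (see, for instance, \cite[Table~6.1.3]{gls}), the Schur multiplier of $S=\Omega^\pm_{2n}(q)$ coincides with this algebraic center outside a short explicit list of exceptional cases; within the range $q$ even, $n\geq 4$, the only exception is $S=\Omega^+_8(2)=D_4(2)$, whose Schur multiplier is $C_2\times C_2$.

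Consequently, whenever $S\neq\Omega^+_8(2)$ there is no quasisimple group $G$ with $|\bZ(G)|$ a prime and $G/\bZ(G)\cong S$, and Theorem~\ref{thm:E} holds vacuously. It therefore remains only to dispatch the single case $S=\Omega^+_8(2)$: any admissible $G$ is (up to the triality outer automorphism) a group of the form $2\cdot\Omega^+_8(2)$, and from \cite{Conway} the smallest nontrivial character degree of $\Omega^+_8(2)$ is $28$, whereas $2\cdot\Omega^+_8(2)$ admits faithful irreducible characters $\chi$ of degree $8$ (the half-spin representations). Hence $\chi(1)/|\bZ(G)|=4\notin\cd(S)$, which is precisely \eqref{eq:einstein}.

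The only (rather mild) obstacle in this plan is to be sure that the list of exceptional Schur multipliers in the range $q$ even, $n\geq 4$ really is limited to $\Omega^+_8(2)$; this is a short bookkeeping verification from \cite{gls} or the ATLAS, after which the argument is immediate.
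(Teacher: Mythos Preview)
Your proposal is correct and follows exactly the same route as the paper: the paper's proof simply observes that the Schur multiplier of $\Omega^\pm_{2n}(q)$ with $q$ even and $n\geq 4$ is trivial except for $\Omega^+_8(2)$ (whose multiplier is the Klein four-group), and then defers the latter case to a direct check with \cite{Conway}. Your version is the same argument with the ATLAS verification spelled out explicitly.
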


\begin{proof}
Simple even-dimensional orthogonal groups in even characteristic
have trivial Schur multiplier, unless the group is $\Omega^+_8(2)$.
The Schur multiplier $\Omega^+_8(2)$ is the Klein four-group, and
one can verify the result directly using \cite{Conway}.
\end{proof}

We have completed the proof of Theorem~\ref{thm:E}.

The case of even-dimensional orthogonal groups in odd characteristic
appears to be complicated. On one hand, the Schur multiplier of $S$
is of order 4, and hence our group $G$ (in Theorem~\ref{thm:E}) is
not the entire Schur cover of $S$.
Lemma~\ref{lem:restriction-center} is therefore not enough to
produce a desired character. On the other hand, it is much more
difficult than the odd-dimensional case to control the centralizer
of a semisimple element in the dual group $G^*$, which in this case
is the projective conformal orthogonal group
$\mathrm{P}(\mathrm{CO}^\pm_{2n}(q)^0)$.


\section{Conjecture \ref{conj:C} implies
Conjecture~\ref{conj:hup}}\label{sec:3}

In this short section we show that, if Conjecture~\ref{conj:C} holds
for a simple group $H$ that is not an even-dimensional orthogonal
group in odd characteristic, then Conjecture~\ref{conj:hup} holds
for $H$.

\begin{thm}\label{thm:last-section}
Let $H$ be a non-abelian simple group. Suppose that $H\not\cong
P\Omega^\pm_{2n}(q)$ with $q$ odd and $n\geq 4$. If
Conjecture~\ref{conj:C} holds for $H$, then
Conjecture~\ref{conj:hup} holds for $H$.
\end{thm}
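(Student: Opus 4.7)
The plan is to assume $\cod(G)\subseteq\cod(H)$ and deduce $G\cong H$, first extracting a simple quotient isomorphic to $H$ via Theorem~B of \cite{hmt}, and then using Theorem~\ref{thm:E} and Conjecture~\ref{conj:C} to rule out any normal structure sitting below that quotient.

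First I would verify that $G$ must be perfect: a prime-order quotient of $G/G'$ would produce a linear character with prime codegree $p\in\cod(G)\subseteq\cod(H)$, forcing an irreducible character of $H$ of degree $|H|/p$ and violating the trivial bound $\chi(1)^2\le |H|$ valid for every $\chi\in\Irr(H)$. Pick $M$ maximal normal in $G$; perfectness makes $G/M$ a nonabelian simple group, and since characters of $G/M$ inflate to $G$ with the same codegree, $\cod(G/M)\subseteq\cod(H)$ and Theorem~B of \cite{hmt} gives $G/M\cong H$, in particular $\cod(G)=\cod(H)$. Choosing $G$ minimal among counterexamples, any proper nontrivial normal subgroup $N$ of $G$ with $N\subsetneq M$ would satisfy $\cod(G/N)\subseteq\cod(H)$ and hence $G/N\cong H$ by minimality, contradicting the fact that $M/N$ is a nontrivial proper normal subgroup of the simple group $G/N$. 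Therefore $M$ is a minimal normal subgroup of $G$.

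If $M$ is abelian, it is elementary abelian of exponent $p$ and, under conjugation, an irreducible $\F_p[H]$-module. If the $H$-action is trivial, $M\le\bZ(G)$ and dimension considerations force $|M|=p$; then $G$ is quasisimple with $|\bZ(G)|=p$ and $G/\bZ(G)\cong H$, and Theorem~\ref{thm:E} supplies a faithful $\chi\in\Irr(G)$ with $\chi(1)/p\notin\cd(H)$. Since $\cod(\chi)=p|H|/\chi(1)$, the equality $\cod(\chi)=|H|/\psi(1)$ would give $\psi(1)=\chi(1)/p\in\cd(H)$, a contradiction. If the $H$-action is nontrivial, simplicity of $H$ forces its kernel to be trivial, so $M$ is a faithful irreducible $H$-module and Conjecture~\ref{conj:C} provides a faithful $\chi\in\Irr(G)$ with $\chi(1)_p<|M|$; then
\[
\cod(\chi)_p=\frac{|M|\cdot|H|_p}{\chi(1)_p}>|H|_p,
\]
contradicting the fact that every element of $\cod(H)$ divides $|H|$.

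The principal obstacle is the remaining case in which $M=T_1\times\cdots\times T_k$ is a direct power of a nonabelian simple group $T$. Here the plan is to exhibit an $\Aut(T)$-invariant faithful character $\eta\in\Irr(T)$ (for instance, the Steinberg character in the Lie-type case), form the $G$-invariant faithful character $\alpha=\eta\otimes\cdots\otimes\eta\in\Irr(M)$, and analyze an irreducible constituent $\chi$ of $\alpha^G$ via Clifford theory, so that $\chi(1)=e\,\eta(1)^k$ for some positive integer $e$ with $e^2\le[G:M]=|H|$. The relation $\cod(\chi)=|H|/\psi(1)$ then forces $(|T|/\eta(1))^k\mid e\le\sqrt{|H|}$ together with $\psi(1)=e(\eta(1)/|T|)^k\in\cd(H)$; varying $\eta$ and invoking known arithmetic constraints on character degrees of nonabelian simple groups (partly from \cite{hmt}) should rule out every such configuration. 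The technical hurdles are to secure a suitable $\Aut(T)$-invariant $\eta$ in every simple type, to guarantee faithfulness of $\chi$ via control of $\bC_G(M)$, and to carry out the resulting arithmetic case analysis against $\cd(H)$.
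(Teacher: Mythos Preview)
Your overall architecture matches the paper's proof: take a minimal counterexample, show $G$ is perfect, use Theorem~B of \cite{hmt} to get $G/M\cong H$ for a maximal normal $M$, reduce to $M$ minimal normal, and split into the abelian and nonabelian cases, handling the former via Theorem~\ref{thm:E} and Conjecture~\ref{conj:C}. Two points deserve attention.

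First, what you call ``the principal obstacle''---the nonabelian minimal normal subgroup case---is in fact the easiest part, and your proposed route through $\Aut(T)$-invariant characters and arithmetic case analysis is unnecessary. The paper dispatches it in one line: by \cite[Theorem~7.1]{mor22}, some nonprincipal $\theta\in\Irr(M)$ extends to $\chi\in\Irr(G)$, so $\chi(1)=\theta(1)<|M|$ and hence $\cod(\chi)=|G|/\chi(1)>|G|/|M|=|H|$, which is impossible since every element of $\cod(H)$ divides $|H|$. For this last step one needs $\chi$ faithful, and the paper secures this by first observing that $M$ is the \emph{unique} minimal normal subgroup (otherwise the argument you gave forces $G\cong H\times H$, and then e.g.\ $\psi\times\psi$ for a small-degree faithful $\psi\in\Irr(H)$ has codegree $(|H|/\psi(1))^2>|H|$). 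You omit this uniqueness step; it is short but essential.

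Second, a small gap in your abelian case: to invoke Conjecture~\ref{conj:C} you need $p\mid|H|$. The paper obtains this from $\pi(G)=\pi(\cod(G))\subseteq\pi(\cod(H))=\pi(H)$; you should insert this observation before applying the conjecture.
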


\begin{proof}
Let $G$ be a minimal counterexample to Conjecture~\ref{conj:hup}.
Since $H$ is perfect, \cite[Theorem~2.3]{hmt} implies that $G$ is
perfect. Let $N$ be a maximal normal subgroup of $G$. Since
$\cod(G/N)\subseteq\cod(G)\subseteq\cod(H)$, and $G/N$ is simple, it
follows from \cite[Theorem~B]{hmt} that $G/N\cong H$. Furthermore,
by the minimality of $G$ as a counterexample, we have that $N$ is a
minimal normal subgroup of $G$. Also, $N$ is the unique minimal
normal subgroup of $G$ since, otherwise, $G=H\times H$, violating
the assumption $\cod(G)\not\subseteq\cod(H)$. In particular, every
character in $\Irr(G|N)$ is faithful.

We claim that $N$ is an elementary abelian $p$-group for some prime
$p$ that divides $|H|$. Suppose that $N$ is not abelian. Then there
exists a non-principal character in $\Irr(N)$ that extends to
$\chi\in\Irr(G)$ (see \cite[Theorem~7.1]{mor22} for instance).
Clearly, $\chi(1)<|N|$. Hence $\cod(\chi)>|G/N|=|H|$, which
contradicts $\cod(G)\subseteq\cod(H)$. We have shown that $N$ is an
elementary abelian $p$-group. Let $\pi(X)$ denote the set of prime
divisors of $|X|$. Since
$\pi(G)=\pi(\cod(G))\subseteq\pi(\cod(H))=\pi(H)$, we deduce that
$p$ divides $|H|$, as claimed.

Note that $\bC_G(N)=N$ or $\bC_G(N)=G$. Assume first that
$\bC_G(N)=G$. Then $N=\bZ(G)$ and, as $G$ is perfect and $N$ is the
unique minimal normal subgroup of $G$, $G$ must be a quasisimple
group with the center $\bZ(G)$ of prime order. We are therefore done
using Theorem~\ref{thm:E}.

It remains to consider the case $\bC_G(N)=N$. In particular, $N$ is
a faithful irreducible $H$-module. By Conjecture~\ref{conj:C}, there
exists $\chi\in\Irr(G)$ faithful such that $\chi(1)_p<|N|$.
Therefore, $\cod(\chi)_p>|G|_p/|N|=|H|_p$. It follows that
$\cod(\chi)\not\in\cod(H)$, which contradicts the assumption
$\cod(G)\subseteq\cod(H)$.
\end{proof}

Note that if one could prove Theorem~\ref{thm:E} for
even-dimensional orthogonal groups, then the above argument also
shows that Conjecture~\ref{conj:C} implies Conjecture~\ref{conj:hup}
for these groups.

\section{Theorem \ref{thm:D}}\label{sec:4}

We now work toward a proof of Conjecture~\ref{conj:C}. The following
simple observation will be useful.

\begin{lem}
\label{lem-proj} Let $G$ be a finite group and $N\trianglelefteq G$.
Let $p$ be a prime divisor of $|G/N|$ and let $P/N$ be a Sylow
$p$-subgroup of $G/N$.
\begin{enumerate}
\item Suppose that $\theta\in\Irr(N)$ is $G$-invariant. Then there exists
$\chi\in\Irr(G|\theta)$ such that
\[\left(\chi(1)/\theta(1)\right)_p\leq\sqrt{|G/N|_p}.\]

\item Suppose that $\theta\in\Irr(N)$ is $P$-invariant. Then there exists
$\chi\in\Irr(G|\theta)$ such that
\[\left(\chi(1)/\theta(1)\right)_p\leq\sqrt{|G/N|_p}.\]
\end{enumerate}
\end{lem}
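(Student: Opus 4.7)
The plan is to dispatch part (1) by a pigeonhole argument on the Sylow $p$-subgroup $P$, and then to reduce part (2) to part (1) by passing to the inertia subgroup of $\theta$.

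For (1), the starting observation is that since $\theta$ is $G$-invariant, every $\chi\in\Irr(G\mid\theta)$ satisfies $\chi_N = e_\chi\theta$ for some positive integer $e_\chi := \chi(1)/\theta(1)$, and Frobenius reciprocity together with $\theta^G(1) = [G:N]\theta(1)$ yields the identity
\[
\sum_{\chi\in\Irr(G\mid\theta)} e_\chi^2 \;=\; [G:N].
\]
Applying this identity to the pair $(P,N)$ in place of $(G,N)$ (using that $\theta$ is automatically $P$-invariant) gives
\[
\sum_{\psi\in\Irr(P\mid\theta)} f_\psi^2 \;=\; [P:N] \;=\; |G/N|_p,
\]
with $f_\psi := \psi(1)/\theta(1)$, so by pigeonhole there exists some $\psi_0\in\Irr(P\mid\theta)$ with $f_{\psi_0} \le \sqrt{|G/N|_p}$. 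I would then let $\chi$ be any irreducible constituent of $\psi_0^G$. Since $\chi$ lies over $\psi_0$ and hence over $\theta$, and since $\theta$ is $G$-invariant, $\chi\in\Irr(G\mid\theta)$; the bound $\chi(1)\le\psi_0^G(1)=[G:P]\psi_0(1)$ combined with the fact that $[G:P]_p=1$ then yields $(\chi(1)/\theta(1))_p = (e_\chi)_p \le ([G:P]f_{\psi_0})_p = (f_{\psi_0})_p \le f_{\psi_0} \le \sqrt{|G/N|_p}$, as required.

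For (2), I would set $T := I_G(\theta)$. Since $\theta$ is $P$-invariant one has $P\le T$, and since $P/N$ is a $p$-subgroup of $T/N$ that is already Sylow in the larger quotient $G/N$, it is Sylow in $T/N$ as well; in particular $|T/N|_p = |G/N|_p$ and $[G:T]$ is a $p'$-number. Part (1), applied to the triple $(T,N,\theta)$ with Sylow $p$-subgroup $P/N$ of $T/N$, produces $\eta\in\Irr(T\mid\theta)$ with $(\eta(1)/\theta(1))_p \le \sqrt{|T/N|_p} = \sqrt{|G/N|_p}$. The Clifford correspondent $\chi := \eta^G\in\Irr(G\mid\theta)$ then satisfies $\chi(1) = [G:T]\eta(1)$, and because $[G:T]_p=1$ we get $(\chi(1)/\theta(1))_p = (\eta(1)/\theta(1))_p \le \sqrt{|G/N|_p}$. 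I do not anticipate a genuine obstacle here; both halves are essentially a standard Clifford-theoretic packaging together with an averaging inequality. The two small points to verify carefully are the sum formula $\sum e_\chi^2 = [G:N]$ (immediate from computing $\theta^G(1)$ two ways) and the observation that $P/N$ remains Sylow after restriction to $T/N$.
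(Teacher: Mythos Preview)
Your proof of part (2) is correct and coincides with the paper's. The problem is in part (1): the step
\[
(e_\chi)_p \;\le\; \bigl([G:P]\,f_{\psi_0}\bigr)_p
\]
does not follow from $e_\chi \le [G:P]\,f_{\psi_0}$, since $a\le b$ never implies $a_p\le b_p$ in general. Concretely, take $G=\Sy_3$, $N=1$, $p=2$, $\theta=\mathbf{1}_N$. Then $P\cong C_2$, and the sign character $\psi_0$ of $P$ has $f_{\psi_0}=1\le\sqrt{2}$; but $\psi_0^G$ contains the $2$-dimensional irreducible $\chi$, for which $(e_\chi)_2=2>1=([G:P]f_{\psi_0})_2$. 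So ``let $\chi$ be any irreducible constituent of $\psi_0^G$'' is too coarse.

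There are two easy repairs. The first is what the paper actually does: drop the detour through $P$ and argue directly in $G$. From your own identity $\sum_{\chi\in\Irr(G\mid\theta)}e_\chi^2=[G:N]$, let $m$ be the smallest $p$-part occurring among the $e_\chi$; since $m$ is a $p$-power with $m\le (e_\chi)_p$ for all $\chi$, we have $m\mid e_\chi$ for all $\chi$, hence $m^2\mid [G:N]$ and so $m^2\le |G/N|_p$. The second repair keeps your route through $P$ but chooses $\chi$ more carefully: writing $\psi_0^G=\sum_i m_i\chi_i$ and noting $(\psi_0^G(1))_p=(\psi_0(1))_p=f_{\psi_0}\cdot\theta(1)_p$ (here $[G:P]$ is $p'$ and $f_{\psi_0}\mid [P:N]$ is a $p$-power by \cite[Corollary~11.29]{Isaacs}), not every constituent can satisfy $\chi_i(1)_p>f_{\psi_0}\,\theta(1)_p$, else $p\cdot f_{\psi_0}\,\theta(1)_p$ would divide the sum $\psi_0^G(1)$. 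Picking such a $\chi$ gives $(e_\chi)_p=\chi(1)_p/\theta(1)_p\le f_{\psi_0}\le\sqrt{|G/N|_p}$.
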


\begin{proof}
We first prove (1). Replacing $(G,N,\theta)$ by an isomorphic
character triple (see Chapter 11 of \cite{Isaacs}), we may assume
that $N$ is central and $\theta$ is linear. Let  $m_p(G)$ be the
smallest $p$-part of the degrees of the characters in
$\Irr(G|\theta)$. Since
$|G/N|=\sum_{\chi\in\Irr(G|\theta)}\chi(1)^2$ is a multiple of
$m_p(G)^2$, we deduce that $m_p(G)\leq\sqrt{|G/N|_p}$, as desired.

For (2), let $T=I_G(\theta)$ - the inertia subgroup of $\theta$ in
$G$, so that $P\leq T$. By part (1), there exists
$\psi\in\Irr(T|\theta)$ such that
\[\left(\psi(1)/\theta(1)\right)_p\leq\sqrt{|T/N|_p}=\sqrt{|G/N|_p}.\]
Now, by Clifford's correspondence, $\chi=\psi^G\in\Irr(G|\theta)$.
Notice that since $|G:T|$ is a $p'$-number,  $\chi(1)_p=\psi(1)_p$,
and so
\[\left(\chi(1)/\theta(1)\right)_p\leq\sqrt{|G/N|_p},\]
as desired.
\end{proof}

The following result will allow us confirm Conjecture~\ref{conj:C}
in many cases.

\begin{prop}
\label{prop-bra} Conjecture~\ref{conj:C} holds if
$|N|>\sqrt{|H|_p}$. In particular, Conjecture~\ref{conj:C} holds if
$p^{2d}>|H|_p$, where $d$ is the smallest degree of the non-trivial
irreducible $p$-Brauer characters of $H$.
\end{prop}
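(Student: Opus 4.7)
The plan is to apply Lemma~\ref{lem-proj}(2) to a carefully chosen nontrivial linear character of $N$. Let $P/N$ be a Sylow $p$-subgroup of $G/N=H$; since $N$ is itself a $p$-group, $P$ is a $p$-group acting on the set $\Irr(N)$, whose cardinality $|N|$ is divisible by $p$ as $N$ is nontrivial. The standard $p$-group fixed-point congruence gives $|\Irr(N)^P|\equiv|N|\equiv 0\pmod{p}$, and since the trivial character is fixed, there exist at least $p-1\ge 1$ nontrivial $P$-invariant $\theta\in\Irr(N)$. Fixing any such $\theta$, Lemma~\ref{lem-proj}(2) then produces $\chi\in\Irr(G|\theta)$ with
\[
\chi(1)_p=(\chi(1)/\theta(1))_p\le\sqrt{|G/N|_p}=\sqrt{|H|_p}<|N|,
\]
where the last inequality is the hypothesis. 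This already supplies the required $p$-part bound.

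It remains to verify that such a $\chi$ is faithful. The key preliminary is that $N$ is the unique minimal normal subgroup of $G$: if $M\trianglelefteq G$ satisfies $M\cap N=1$, then $[M,N]\le M\cap N=1$, so $M\le \bC_G(N)$; but faithfulness of the $H$-action on $N$ forces $\bC_G(N)=N$, and hence $M\le N\cap M=1$. Thus every nontrivial normal subgroup of $G$ contains $N$. Next, because $\theta$ is a nontrivial constituent of $\chi|_N$, the $G$-invariant subgroup $\bigcap_{g\in G}\ker(\theta^g)$ is a proper $H$-submodule of the irreducible $H$-module $N$ and is therefore trivial; arguing via $\chi|_N=e\sum_g\theta^g$ with $|\theta^g(n)|=1$, this forces $\ker(\chi)\cap N=1$. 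By uniqueness of the minimal normal subgroup, $\ker(\chi)=1$, as desired.

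The ``In particular'' clause is then formal: $N$ is a nontrivial irreducible $\F_pH$-module (nontriviality coming from faithfulness together with $H\ne 1$), so $\dim_{\F_p}N\ge d$, where $d$ is the smallest nontrivial irreducible $p$-Brauer degree of $H$; hence $|N|\ge p^d$. The hypothesis $p^{2d}>|H|_p$ then yields $|N|^2\ge p^{2d}>|H|_p$, reducing us to the first statement. I do not foresee a genuine obstacle: the two moving parts---producing a $P$-fixed character from the $p$-group action, and identifying $N$ as the unique minimal normal subgroup via faithfulness---are both short and purely structural, and the rest is a direct application of Lemma~\ref{lem-proj}.
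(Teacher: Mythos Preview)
Your proof is correct and follows essentially the same line as the paper's: both locate a nontrivial $\lambda\in\Irr(N)$ whose $G$-orbit has $p'$-size (equivalently, whose inertia group contains a Sylow $p$-subgroup) and then invoke Lemma~\ref{lem-proj}(2). The paper simply asserts the existence of such $\lambda$ and does not spell out the faithfulness of $\chi$; you make both points explicit via the $p$-group fixed-point count and the observation that $\bC_G(N)=N$ forces $N$ to be the unique minimal normal subgroup, which is a welcome clarification but not a different idea.
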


\begin{proof}
Consider the action of $G$ on the $p$-group $\Irr(N)$. It is clear
that there exists $\mathbf{1}_N\neq\lambda\in\Irr(N)$ in an orbit of
$p'$-size. Set $T:=I_G(\lambda)$, so that $T$ contains a Sylow
$p$-subgroup of $G$. By Lemma~\ref{lem-proj}, there exists
$\chi\in\Irr(G|\lambda)$ such that
$$
\chi(1)_p\leq\sqrt{|G/N|_p}=\sqrt{|H|_p}<|N|,
$$
as wanted.

In order to see the second part, note that since $N$ is  a faithful
irreducible $H$-module, $\dim N\geq d$. Hence $|N|\geq
p^d>\sqrt{|H|_p}$.
\end{proof}

The following completes the proof of Theorem~\ref{thm:D}. We will
write $d(H)$ to denote the smallest degree of the nontrivial irreducible Brauer
characters of $H$ (the prime will be understood from the context).

\begin{prop}
\label{prop-tech} Let $H$ be a nonabelian simple group. Let $G$ be a
perfect group with a normal elementary abelian $p$-subgroup $N$ for
some prime $p$ dividing $|H|$ such that $G/N=H$ and $N$ is a
faithful irreducible $H$-module. Suppose that $H$ belongs to one of
the following:
\begin{enumerate}
\item
sporadic groups,
\item
alternating groups,
\item
groups of Lie type in characteristic $\ell\neq p$,
\item
exceptional group of Lie type in characteristic $p$: $\ta B_2(q)$,
$\ta G_2(q)$, $\ta F_4(q)$, $G_2(q)$, $\ta D_4(q)$, $F_4(q)$, $\ta
E_6(q)$,
\item
classical groups in characteristic $p$: $\PSL_{\leq 4}(q)$,
$\PSU_{\leq 7}(q)$, $\PSp_4(q)$, $\PSp_6(q)$,
$\mathrm{P}\Omega_7(q)$, $\mathrm{P}\Omega_8^\pm(q)$.
\end{enumerate}
Then there exists $\chi\in\Irr(G)$ faithful such that
$\chi(1)_p<|N|$.
\end{prop}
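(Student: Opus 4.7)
The plan is to invoke Proposition~\ref{prop-bra}, which reduces Conjecture~\ref{conj:C} for $H$ to the purely numerical assertion
\[
2\,d(H)>v_p(|H|)
\]
for every prime $p$ dividing $|H|$, where $d(H)$ is the smallest dimension of a nontrivial irreducible $\F_p H$-module (so that any faithful irreducible elementary-abelian $p$-module $N$ for $H$ satisfies $|N|\geq p^{d(H)}$, and hence $|N|>\sqrt{|H|_p}$ once the displayed inequality holds). In each of the families (1)--(5) I would verify this bound, with a short residual case analysis when necessary.

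For case~(1), sporadic groups, the verification is prime-by-prime directly from the modular Atlas \cite{Conway}. For case~(2), alternating groups, I would combine James's lower bound $d(A_n)\geq n-2$, valid for every prime $p$ dividing $|A_n|$ and every $n\geq 7$, with the Legendre estimate $v_p(|A_n|)\leq (n-1)/(p-1)$; the resulting inequality $2(n-2)>(n-1)/(p-1)$ is immediate for $p\geq 2$ and $n\geq 5$, and the finitely many small alternating groups ($n\leq 8$) are dispatched by direct inspection.

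For case~(3), where $H$ is of Lie type in cross-characteristic $\ell\neq p$, I would apply the Landazuri--Seitz--Zalesskii lower bound, which gives $d(H)\geq c\,q^{r}$ for a positive constant $c$ and a positive exponent $r$ attached to the Lie type. Since $p\neq \ell$, the quantity $v_p(|H|)$ grows only logarithmically in $q$, so the inequality $2d(H)>v_p(|H|)$ holds with wide margin for $q$ sufficiently large, and the residual small $q$ are cleared against the modular Atlas.

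Cases~(4) and~(5), concerning defining-characteristic Lie type families, form the main obstacle. Writing $q=p^{a}$ and letting $N_\Phi$ denote the number of positive roots of the underlying root system $\Phi$, one has $v_p(|H|)=aN_\Phi$. For each of the listed types, the minimal nontrivial irreducible $\overline{\F}_p H$-module is (an $\F_p$-form of) the natural module, or, for type $D_4$, one of the three $8$-dimensional modules; denoting its $\overline{\F}_p$-dimension by $d_{\min}$ and noting that its field of definition is $\F_q$, we obtain $d(H)\geq d_{\min}\cdot a$, so that the required inequality reduces to the root-system condition $2d_{\min}>N_\Phi$. This is checked type by type: in type $A$ the constraint is $n\leq 4$ (giving $\PSL_{\leq 4}$), in type ${}^2\!A$ it is $n\leq 7$ (giving $\PSU_{\leq 7}$), in types $B$ and $C$ it is $n\leq 3$ (giving $\PSp_{\leq 6}$ and $\mathrm{P}\Omega_7$), and in type $D$ we have $n=4$ (giving $\mathrm{P}\Omega_8^\pm$); for the exceptional groups in (4) one verifies, for instance, $2\cdot 4>2$ for ${}^2B_2$, $2\cdot 7>6$ for $G_2$ and ${}^2G_2$, $2\cdot 8>12$ for ${}^3D_4$, $2\cdot 26>24$ for $F_4$ and ${}^2F_4$, and $2\cdot 27>36$ for ${}^2E_6$. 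The subtlety I anticipate is that for certain small $\PSL_n^\epsilon(q)$ and $\PSp_{2n}(q)$ the natural module descends from the universal cover to $H$ itself only when a congruence on $q$ is satisfied; in the remaining cases one replaces it with an appropriate exterior or symmetric power, a finite case analysis that I expect to carry out uniformly using L\"ubeck's tables \cite{Lubeck} while keeping the inequality $2d_{\min}>N_\Phi$ intact.
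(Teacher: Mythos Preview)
Your plan for parts (1)--(3) matches the paper's strategy: both reduce to Proposition~\ref{prop-bra} and verify $p^{2d(H)}>|H|_p$ via known lower bounds for $d(H)$. Two small caveats. For alternating groups, the bound $d(\Al_n)\geq n-2$ fails for several small $n$ (e.g.\ $d(\Al_7)=4$ at $p=2$); the paper instead combines James's and M\"uller's more refined estimates and treats $n\leq 10$ by hand. For cross characteristic, your ``logarithmic growth of $v_p(|H|)$'' is too loose to be a proof; the paper argues more sharply by first checking $p^{2d(H)}>|H|_\ell$ from the Land\'azuri--Seitz--Zalesskii bounds and then invoking the dominant-prime result of Kimmerle--Lyons--Sandling--Teague to pass from $|H|_\ell$ to $|H|_p$, with four explicit exceptions handled separately.

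For parts (4)--(5) your approach is genuinely different from the paper's. You try to stay within Proposition~\ref{prop-bra} by bounding $d(H)$ from below in defining characteristic. The paper does \emph{not} do this: it assumes for contradiction that $|N|\leq\sqrt{|H|_p}$, picks a nontrivial $\lambda\in\Irr(N)$, and observes that the inertia group $T=I_G(\lambda)$ is proper (since $N$ is not cyclic) with $|G:T|<|N|\leq\sqrt{|H|_p}$; hence $H$ has a maximal subgroup of index below $\sqrt{|H|_p}$, contradicting the tabulated minimal permutation degrees in \cite{GMPS15}. This sidesteps all module-theoretic issues. Your route is viable in principle but has a real gap: from ``the natural module is defined over $\F_q$'' you conclude $d(H)\geq d_{\min}\cdot a$, yet this does not follow --- you must also rule out an $\overline{\F}_p$-irreducible of dimension $d'$ defined over a proper subfield $\F_{p^e}$ with $ed'<ad_{\min}$. (For unitary groups the field of definition is $\F_{q^2}$, not $\F_q$, which in fact helps; your stated constraint $n\leq 7$ then agrees with neither $2d_{\min}>N_\Phi$ nor $4d_{\min}>N_\Phi$.) The descent-of-scalars issue can be handled, but it is precisely what the paper's permutation-degree argument avoids.
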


\begin{proof}
Let $d(H)$ denote the smallest degree of nontrivial irreducible
$p$-Brauer characters of $H$. By Proposition \ref{prop-bra}, the
result follows for $H$ if we are able to prove that \[
p^{2d(H)}>|H|_p.\]

\medskip

(i) The smallest degrees of the nontrivial $p$-Brauer characters of
the sporadic groups are collected in Table 1 of \cite{jan}. It is
routine to check that the desired inequality is satisfied.

\medskip

(ii) We now consider the alternating groups. It is well-known that
irreducible $\mathbb{F}_p\Sy_n$-modules are in one-to-one bijection
with $p$-regular partitions of $n$. Let $\lambda$ be such a
partition and $D^\lambda$ denote its corresponding module. When the
first part of $\lambda$ is at least $n-4$, an explicit lower bound
for $\dim D^\lambda$ was obtained by James \cite{James83}. On the
other hand, it was shown by M\"{u}ller in \cite[\S 6.2]{Mul16} that,
for $n\geq 11$ and $\lambda$ having the first part at most $n-4$,
\[
\dim D^\lambda \geq
\min\left\{\frac{n^4-14n^3+47n^2-34n}{24},g(n)\right\},
\]
where
\[
g(n):=\left\{\begin{array}{ll}55\cdot 2^{(n-11)/2}& \mathrm{ if }\ n \ \mathrm{ is \ odd}, \\
89\cdot 2^{(n-12)/2} & \mathrm{if}\ n\ \mathrm{is\ even}.\end{array}
\right.
\]
Note that $|\Al_n|_p\leq p^{(n-1)/(p-1)}$ for $p$ odd and
$|\Al_n|_2\leq 2^{n-2}$. These bounds are sufficient to verify the
inequality for alternating groups of degree, of course, $n\geq 11$.
The case of smaller $n$ can be checked using \cite{Atlas2}.

\medskip

(iii) Next, consider the case where $H$ is a simple group of Lie
type in characteristic $\ell\neq p$. A good bound for the smallest
degree $d(H)$ of the non-trivial irreducible $p$-Brauer characters
of $H$ was obtained by Land\'{a}zuri, Seitz, and Zalesskii
\cite{LS74,SZ93} (see also \cite[Table~I]{Tiep-Zalesskii96}). (Note
that this bound does not depend on $p$.) From this it is easy to
check that \[p^{2d(H)}> |H|_\ell\] for all $H$. Therefore, we are
done if $\ell$ is a dominant prime in $|H|$ (which means that
$|H|_\ell \geq |H|_p$ for every $p$). If otherwise, by \cite[Theorem
3.3]{Kimmerle-et-al}, $H$ is either $\PSL_2(\ell)$ with $\ell$ a
Mersenne prime, $\PSL_2(2^a)$ with $2^a+1$ a Fermat prime,
$\PSL_2(8)$, or $\PSU_3(3)$, in which cases the desired inequality
can be easily verified.

\medskip

(iv) Now, assume that $H$ is a simple group of Lie type in
characteristic $p$. Recall that, By Proposition \ref{prop-bra}, we
are done if $|N|>\sqrt{|H|_p}$. Assume by contradiction that
\[|N|\leq\sqrt{|H|_p}.\]
Let $\lambda\in\irr(N)\backslash \{\mathbf{1}_N\}$ and
$T:=I_G(\lambda)$. Since $|G:T|$ is the number of $G$-conjugates of
$\lambda$, we have $|G:T|<|N|$, and it follows that
\[|G:T|<\sqrt{|H|_p}.\]

If $T=G$ then $\ker(\lambda) \nor G$, and thus $\ker(\lambda)=1$,
implying that $N$ is cyclic, which cannot happen as $H$ can be
embedded into $\Aut(N)$. So $T$ is a proper subgroup of $G$, and
thus $T/N$ is a proper subgroup of $H=G/N$ as well. Let $M$ be a
maximal subgroup of $G/N$ such that $T/N \subseteq M$. Then
\[|G:T|\geq |H:M|.\]

We now have $|H:M|<\sqrt{|H|_p}$, where $M$ is a certain maximal
subgroup of $H$ and $p$ is the defining characteristic of $H$. Note
that $H$ naturally permutes the right cosets of $M$, so $[H:M]$ is
the degree of a permutation representation of $H$. Minimal degrees
of permutation representations of simple groups of Lie type have
been worked out by various authors and are available, for example,
in \cite[Table 4]{GMPS15}. The data shows that, for the listed
groups in (iv) and (v), the minimal degree for $H$ always exceeds
$\sqrt{|H|_p}$. This contradiction completes the proof.
\end{proof}

Theorem \ref{thm:D} follows from Theorem \ref{thm:last-section},
Lemma~\ref{lem-proj}, Proposition~\ref{prop-bra}, and
Proposition~\ref{prop-tech}. Since we have proved
Conjecture~\ref{conj:C} for $D_4$ and $\ta D_4$ in
Proposition~\ref{prop-tech}, Conjecture~\ref{conj:hup} for these
groups would follow from a proof of Theorem~\ref{thm:E} for them.


\section{Further reductions}
\label{sec:cohomo}

In this section we discuss some further possible reductions for
Conjectures~\ref{conj:C} and \ref{conj:hup}.

The split-extension case of  Conjecture~\ref{conj:C} is elementary.
Note that the conclusion of the following is much stronger than
needed.

\begin{prop}
\label{prop-split} Let $G=HN$, where $H$ is simple non-abelian and
$N$ is a normal  elementary abelian $p$-subgroup of $G$. Suppose
that the (conjugation) action of $H$ on $N$ is faithful and
irreducible. Then there exists a faithful character $\chi\in\Irr(G)$
of degree prime to $p$.
\end{prop}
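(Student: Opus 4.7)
The plan is a direct application of Clifford theory. First, I would observe that $H$ permutes $\Irr(N)$; since $|N|$ is a power of $p$, the set of nontrivial characters has size $|N|-1$, which is coprime to $p$. Hence at least one nontrivial $\lambda\in\Irr(N)$ has $H$-orbit of $p'$-size, equivalently its stabilizer $T_H:=\Stab_H(\lambda)$ satisfies $[H:T_H]_p=1$. Because $N$ is abelian, the full inertia group in $G$ is $T:=T_HN$.

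Next, I would use the splitting hypothesis $G=HN$ with $H\cap N=1$ to extend $\lambda$ to a linear character $\tilde\lambda\in\Irr(T)$ via $\tilde\lambda(tn):=\lambda(n)$ for $t\in T_H$, $n\in N$; this is a homomorphism precisely because $T_H$ fixes $\lambda$ under conjugation. Clifford correspondence then gives $\chi:=\tilde\lambda^{G}\in\Irr(G)$ with
\[
\chi(1)=[G:T]\cdot\tilde\lambda(1)=[H:T_H],
\]
which is a $p'$-number by the choice of $\lambda$.

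It remains to verify that $\chi$ is faithful. The restriction $\chi|_N$ is the sum of the $H$-conjugates of $\lambda$, so $\ker(\chi)\cap N=\bigcap_{h\in H}\ker(\lambda^h)$ is an $H$-invariant subgroup of the irreducible $H$-module $N$; since $\lambda\neq\mathbf{1}_N$, it must be trivial. Then $\ker(\chi)$ and $N$ are normal subgroups of $G$ with trivial intersection, so $[\ker(\chi),N]\le\ker(\chi)\cap N=1$, i.e.\ $\ker(\chi)\le\bC_G(N)$. The faithful action of $H$ on $N$ gives $\bC_H(N)=1$, and combined with $G=HN$ this yields $\bC_G(N)=N$. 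Hence $\ker(\chi)\le N\cap\ker(\chi)=1$, as wanted.

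I do not anticipate any real obstacle, because every hypothesis is used exactly once: the splitting supplies the canonical extension $\tilde\lambda$; the $p$-power order of $N$ supplies a $p'$-orbit; the irreducibility of $N$ as an $H$-module forces $\ker(\chi)\cap N=1$; and the faithfulness of the action forces $\bC_G(N)=N$. The stronger conclusion ``prime to $p$'' (rather than merely $\chi(1)_p<|N|$, as in Conjecture~\ref{conj:C}) is exactly what the splitting hypothesis buys, by removing any cocycle obstruction to extending $\lambda$ trivially across $T_H$.
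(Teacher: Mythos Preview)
Your proof is correct and follows the same Clifford-theoretic route as the paper: choose a nontrivial $\lambda\in\Irr(N)$ lying in a $p'$-orbit, extend it to its inertia group, and induce. The only difference is cosmetic---you construct the extension $\tilde\lambda$ explicitly via the complement $T_H$, while the paper obtains it by citing \cite[Problem~6.18 and Corollary~11.31]{Isaacs}; your detailed faithfulness argument is also a welcome addition, since the paper simply asserts that $\tilde\lambda^G$ is faithful.
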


\begin{proof}
Since $H$ acts on the $p$-group $\Irr(N)$,  there exists a
non-principal character $\lambda\in\Irr(N)$ in an $H$-orbit of
$p'$-size.  Thus $T:=I_G(\lambda)$ contains a Sylow $p$-subgroup of
$G$. By \cite[Problem~6.18]{Isaacs}, the character $\lambda$ extends
to every Sylow subgroup of $G$. Now, \cite[Corollary~11.31]{Isaacs}
implies that there exists $\tilde{\lambda}\in\Irr(T|\lambda)$ that
extends $\lambda$. Hence, by the Clifford correspondence,
$\tilde{\lambda}^G$ is a faithful irreducible character of $G$ of
$p'$-degree, as wanted.
\end{proof}

Given a simple group $H$ of classical Lie type, we will refer to the
natural module of the full cover $\tilde{H}$ of $H$ as the natural
module of $H$. We will use the fact, following from
\cite[Proposition~5.4.11]{kl}, that if $\varphi$ is an irreducible
Brauer character of $H$ with $\varphi(1)>d$, where $d$ is the
dimension of the natural module for $H$, then
$\varphi(1)>\log_p\sqrt{|H|_p}$.

\begin{prop}
\label{prop-red1} Let $H$ be a classical simple group of Lie type
over the field with $p$ elements, where $p$ is a prime. Suppose
that, for every perfect group $G$ with a normal subgroup $N$ such
that \begin{enumerate}
\item $G/N=H$,
\item $N$ is an elementary abelian $p$-group of order equal to the size of the
natural module for $H$,
\item $N$ is a faithful irreducible
$H$-module, and
\item $G$ is a non-split extension of $N$ by $H$,
\end{enumerate}
there exists a faithful character $\chi\in\Irr(G)$ such that
$\chi(1)_p<|N|$. Then Conjecture~\ref{conj:C} holds for $H$.
\end{prop}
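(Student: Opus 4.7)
The plan is to reduce an arbitrary instance of Conjecture~\ref{conj:C} for $H$ to the very restricted configuration in the hypothesis by peeling off the easy cases using the tools already developed in Sections~\ref{sec:3} and~\ref{sec:4}. Fix a prime $\ell\mid|H|$ and let $G$ be a perfect group with a normal elementary abelian $\ell$-subgroup $N$ such that $G/N\cong H$ and $N$ is a faithful irreducible $H$-module. We must produce a faithful $\chi\in\Irr(G)$ with $\chi(1)_\ell<|N|$.

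Suppose first that $\ell\neq p$. The Land\'{a}zuri--Seitz--Zalesskii lower bound on the smallest dimension $d(H,\ell)$ of a non-trivial irreducible $\ell$-modular representation of $H$ gives $\ell^{2d(H,\ell)}>|H|_\ell$ for the classical $H$ under consideration (exactly as used in the proof of Proposition~\ref{prop-tech}(iii), with the same short list of small exceptions to be checked individually); since $|N|\geq\ell^{d(H,\ell)}$, Proposition~\ref{prop-bra} supplies the required character. Now assume $\ell=p$. Let $d$ denote the dimension of the natural module for $H$, and let $\varphi$ be the Brauer character of an absolutely irreducible constituent of $N\otimes_{\mathbb{F}_p}\overline{\mathbb{F}}_p$. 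If $\varphi(1)>d$, the fact recalled from \cite[Proposition~5.4.11]{kl} gives $\varphi(1)>\log_p\sqrt{|H|_p}$, whence $|N|>\sqrt{|H|_p}$, and Proposition~\ref{prop-bra} applies again. Otherwise $\varphi(1)\le d$; faithfulness of $N$ rules out the trivial constituent, and the minimality of $d$ forces $\varphi(1)=d$, so $N$ must be an $\mathbb{F}_p$-form of the natural module and $|N|$ equals the set-theoretic size of the natural module---matching condition~(2) in the hypothesis.

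It remains to handle the case $\ell=p$ with $|N|$ equal to the size of the natural module. We distinguish whether the extension $1\to N\to G\to H\to 1$ is split or not. If it splits, Proposition~\ref{prop-split} produces a faithful irreducible character of $G$ of $p'$-degree, so certainly $\chi(1)_p<|N|$. If it does not split, all four conditions (1)--(4) of the hypothesis are satisfied and the required $\chi$ is furnished by assumption. The delicate point in this reduction is the transition from $\varphi(1)=d$ to ``$|N|$ equals the size of the natural module'': for unitary families the natural module is absolutely reducible over $\mathbb{F}_p$ (splitting as a Galois pair of $\overline{\mathbb{F}}_p$-irreducibles), so one has to identify the correct $\mathbb{F}_p$-form of it and verify, family by family in the classical list, that the \cite{kl}-style dimension gap between $d$ and the next-smallest Brauer character degree is genuinely wide enough to leap past $\log_p\sqrt{|H|_p}$---once that is secured, everything else is bookkeeping between the split and non-split alternatives.
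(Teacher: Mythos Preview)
Your reduction follows the same skeleton as the paper's proof: dispose of the split case via Proposition~\ref{prop-split}, dispose of the ``$|N|$ large'' case via Proposition~\ref{prop-bra} together with the dimension gap from \cite[Proposition~5.4.11]{kl}, and observe that what remains is exactly the non-split natural-module configuration covered by the hypothesis. You also make explicit the cross-characteristic reduction (your $\ell\neq p$ paragraph), which the paper's written proof leaves implicit.

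The one substantive divergence is in how you pin down the natural-module case. You pass to an absolutely irreducible constituent $\varphi$ of $N\otimes_{\F_p}\overline{\F_p}$ and then have to argue back from $\varphi(1)=d$ to an identification of $N$ itself as an $\F_p$-form of the natural module---hence your closing caveat about unitary groups and Galois pairs. The paper sidesteps this entirely: it invokes Corollary~3F of \cite{won} to conclude that a \emph{faithful} irreducible $\F_pH$-module is already absolutely irreducible. Once that is in hand, the Brauer character of $N$ itself is an irreducible $\varphi$, the \cite{kl} dichotomy applies directly to $\dim_{\F_p}N$, and the ``$\F_p$-form'' bookkeeping you flag as delicate simply does not arise. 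So your route is correct but carries an extra verification; citing Wong's result would let you delete the final paragraph.
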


\begin{proof}
By Proposition \ref{prop-split} and the hypothesis, we may assume
that the extension is non-split and that $|N|$ exceeds the size of
the natural module for $H$. Since  $N$ is a faithful irreducible
$\F_pH$-module, Corollary~3F of \cite{won} implies that $N$ is
absolutely irreducible.
By hypothesis, we may assume that $\dim_{\F_p} N>d({H})$, which is
the dimension of the natural module for ${H}$.  But then
\cite[Proposition 5.4.11]{kl}
 implies that $|N|>\sqrt{|H|_p}$
and the result follows from Proposition~\ref{prop-bra}.
\end{proof}

We believe that it should be possible to remove the hypothesis on
the order of the field in Proposition~\ref{prop-red1}. As pointed
out to us by Guralnick and Tiep, the second cohomology group
$H^2(G,V)$, where $G$ is a classical group of simply connected type
and $V$ is the natural module, is most of the time vanishing (see
\cite{avr, dem73, gri}, for instance). When $H^2(G,V)$ does vanish,
Conjecture~\ref{conj:C} would follow from
Proposition~\ref{prop-split}. This applies, for example, when
$H=\PSL_n(2)$ for $n>5$, as shown in \cite{dem73}.
\begin{thm}
Conjecture \ref{conj:hup} holds when $H=\PSL_n(2)$ for $n>5$.
\end{thm}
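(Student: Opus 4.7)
The plan is to chain together three results already established in the paper: Theorem~\ref{thm:last-section}, Proposition~\ref{prop-red1}, and Dempwolff's second-cohomology vanishing theorem from \cite{dem73}. Since $\PSL_n(2)=\mathrm{SL}_n(2)$ (because $\gcd(n,2-1)=1$) is a classical simple group in characteristic $2$, and in particular is not of the form $P\Omega^\pm_{2m}(q)$ with $q$ odd, Theorem~\ref{thm:last-section} reduces the task to verifying Conjecture~\ref{conj:C} for $H=\PSL_n(2)$. So from now on my goal is only to establish Conjecture~\ref{conj:C} for this $H$.

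Next, I would invoke Proposition~\ref{prop-red1}, which is tailored to classical simple groups over the prime field. It tells us Conjecture~\ref{conj:C} for $H$ follows as soon as one can handle every perfect extension $G$ satisfying its four conditions: $G/N\cong H$, $N$ is an elementary abelian $2$-group of order $2^n$ (the size of the natural module), $N$ is a faithful irreducible $H$-module, and the extension $G\to H$ is \emph{non-split}. (The split case is already covered by Proposition~\ref{prop-split}, and the case in which $N$ is strictly larger than the natural module is handled inside the proof of Proposition~\ref{prop-red1} via Wong's absolute irreducibility result together with Proposition~\ref{prop-bra}.)

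The crucial input is then Dempwolff's theorem \cite{dem73}: $H^2(\mathrm{SL}_n(2),V)=0$ for all $n>5$, where $V=\mathbb{F}_2^n$ is the natural module. Since equivalence classes of extensions of $V$ by $H$ realizing the prescribed $H$-action on $V$ are classified by $H^2(H,V)$, this vanishing implies that every such extension splits; hence there simply are no groups $G$ of the sort that Proposition~\ref{prop-red1} asks us to handle. The hypothesis of that proposition is therefore vacuously satisfied, so Conjecture~\ref{conj:C} holds for $H$, and Theorem~\ref{thm:last-section} delivers Conjecture~\ref{conj:hup} for $H$.

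The only genuine difficulty in this chain is external and already resolved in \cite{dem73}: the cohomological vanishing $H^2(\mathrm{SL}_n(2),V)=0$ is subtle, and in fact fails at $n=5$, producing the celebrated Dempwolff extension $2^5\cdot\mathrm{SL}_5(2)$, which is precisely why the statement of the theorem is restricted to $n>5$. For related simple groups with non-vanishing second cohomology in the natural module, the non-split branch of Proposition~\ref{prop-red1} would instead need to be analyzed by exhibiting directly a faithful $\chi\in\Irr(G)$ with $\chi(1)_p<|N|$, and it is exactly this point that Section~\ref{sec:cohomo} identifies as the main avenue for future progress.
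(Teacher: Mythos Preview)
Your proof is correct and follows exactly the route the paper takes: reduce Conjecture~\ref{conj:hup} to Conjecture~\ref{conj:C} via Theorem~\ref{thm:last-section}, then apply Proposition~\ref{prop-red1} so that only non-split extensions with $|N|=2^n$ remain, and finally invoke Dempwolff's vanishing $H^2(\SL_n(2),V)=0$ from \cite{dem73} to conclude that no such extension exists. One small point of completeness (which the paper also leaves implicit): Proposition~\ref{prop-red1} allows $N$ to be \emph{any} faithful irreducible $\F_2H$-module of order $2^n$, so a priori $N$ could be the dual $V^\ast$ rather than $V$; but the graph automorphism of $\SL_n(2)$ interchanges $V$ and $V^\ast$, whence $H^2(\SL_n(2),V^\ast)=0$ as well, and \cite[Proposition~5.4.11]{kl} ensures these are the only two options.
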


This provides the first family of groups of Lie type with
arbitrarily large rank for which either of the original Huppert's
conjecture on character degrees or the codegree counterpart is
proved. We expect these ideas to work for other families of simple
groups of Lie type with large rank.


We now present another reduction for Conjecture~\ref{conj:hup} in
the case of classical groups to a problem on $p$-groups. We remark
that, while it is common to reduce problems on finite groups to
simple groups, it is unusual to reduce problems concerning groups
that are close to simple to $p$-groups, which lie at the opposite
end of the spectrum of finite groups.

\begin{hypothesis}\label{hypothesis}
Let $p$ be a prime and let $H$ be a simple classical group over the
field with $q=p^a$ elements. Suppose that $Z$ is a normal (central)
subgroup of order $p$ of a $p$-group $P$ and $P/Z$ is a Sylow
$p$-subgroup of  $H$. Then there exists $\psi\in\Irr(P|Z)$ such that
its degree is less than the size of the natural $\F_q$-module for
$H$.
\end{hypothesis}

In the following, given a normal subgroup $N$ of a group $G$, we write $\Irr(G|N)$ to denote the set of characters $\chi\in\irr(G)$ such that $N$ is not contained in the kernel of $\chi$.

\begin{prop}
\label{prop-classical} 
Let $H$ be a simple classical group. Suppose that
Hypothesis~\ref{hypothesis} holds for $H$. Then
Conjecture~\ref{conj:C} holds for $H$.
\end{prop}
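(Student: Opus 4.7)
The plan is to combine Hypothesis~\ref{hypothesis} with standard Clifford theory: the hypothesis supplies a low-degree irreducible character of a Sylow $p$-subgroup that lies over a central subgroup of order $p$, and a short divisibility argument propagates the resulting $p$-part bound up to $G$.

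Set up the Clifford machinery as follows. Since $H$ acts on the $p$-group $\Irr(N)$, there exists a nontrivial $\lambda\in\Irr(N)$ whose $G$-orbit has size coprime to $p$. Put $T:=I_G(\lambda)$, choose a Sylow $p$-subgroup $P\leq T$ of $G$, and set $K:=\ker\lambda$. Since $\lambda$ is $T$-invariant, $[T,N]\subseteq K$, so $K\trianglelefteq T$ and $N/K\cong C_p$ is central in $T/K$. Moreover $P/K$ is a Sylow $p$-subgroup of $T/K$ with $(P/K)/(N/K)\cong P/N$ a Sylow $p$-subgroup of $H$. Applying Hypothesis~\ref{hypothesis} to $(P/K,N/K)$ yields $\tilde\psi_0\in\Irr(P/K\mid N/K)$ whose degree is strictly smaller than the size of the natural $\F_q$-module for $H$.

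The crux is the passage from $P/K$ to $T/K$ without losing the $p$-part bound. Decompose $\tilde\psi_0^{T/K}=\sum_i a_i\bar\chi_i$ into distinct irreducibles; each $\bar\chi_i$ lies over the nontrivial $\bar\lambda\in\Irr(N/K)$ corresponding to $\lambda$, since $N/K$ is central in $T/K$ and $\tilde\psi_0$ itself lies over $\bar\lambda$. Because $[T:P]$ is coprime to $p$ while $\tilde\psi_0(1)$ is a $p$-power, the $p$-part of $\tilde\psi_0^{T/K}(1)=[T:P]\,\tilde\psi_0(1)$ equals $\tilde\psi_0(1)$ exactly. If $\bar\chi_i(1)_p>\tilde\psi_0(1)$ for every $i$, then $p\cdot\tilde\psi_0(1)$ would divide $\sum_i a_i\bar\chi_i(1)=\tilde\psi_0^{T/K}(1)$, a contradiction. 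Hence some constituent $\bar\chi$ satisfies $\bar\chi(1)_p\leq\tilde\psi_0(1)$.

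Inflate $\bar\chi$ to $\chi_T\in\Irr(T\mid\lambda)$ and apply Clifford's correspondence to produce $\chi:=\chi_T^G\in\Irr(G\mid\lambda)$. Since $[G:T]$ is coprime to $p$, we get $\chi(1)_p=\chi_T(1)_p=\bar\chi(1)_p\leq\tilde\psi_0(1)$, which is strictly smaller than the size of the natural $\F_q$-module for $H$ by Hypothesis~\ref{hypothesis}, and in turn at most $|N|$ by \cite[Proposition~5.4.11]{kl}; hence $\chi(1)_p<|N|$. For faithfulness, observe that $\ker\chi\trianglelefteq G$ cannot contain $N$ (because $\chi$ lies over $\lambda\neq\mathbf{1}_N$), so $H$-irreducibility of $N$ forces $\ker\chi=1$ or $\ker\chi$ to be a complement of $N$ in $G$; the latter would factor $\chi$ through the abelian group $G/\ker\chi\cong N$, making $\chi$ linear and hence trivial by perfectness of $G$, a contradiction. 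The only non-routine step in this plan is the divisibility argument in the third paragraph; the remainder is a direct application of Clifford theory, inflation, and the classical lower bound from \cite{kl} on dimensions of faithful modules for classical groups.
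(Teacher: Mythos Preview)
Your argument follows the same strategy as the paper's proof: apply Hypothesis~\ref{hypothesis} to a quotient of a Sylow $p$-subgroup of $G$, then push the resulting low-degree character up to $G$ via induction and a divisibility bound on $p$-parts. However, there is a genuine gap. Hypothesis~\ref{hypothesis} is stated only for the \emph{defining} prime of $H$ (it opens with ``let $H$ be a simple classical group over the field with $q=p^a$ elements''), and the comparison $|V|\leq|N|$ you invoke via \cite[Proposition~5.4.11]{kl} likewise concerns defining-characteristic modules. But Conjecture~\ref{conj:C} demands the conclusion for \emph{every} prime $p$ dividing $|H|$, so your proof as written handles only the case where $p$ is the defining characteristic and says nothing about cross-characteristic primes. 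The paper closes this gap at the very first step by invoking Proposition~\ref{prop-tech}(3), which already establishes Conjecture~\ref{conj:C} whenever $p$ differs from the defining characteristic of $H$; you need to insert the same reduction.

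There is also a minor imprecision: Hypothesis~\ref{hypothesis} produces some $\tilde\psi_0\in\Irr(P/K\mid N/K)$, but nothing forces it to lie over the specific character $\bar\lambda$ rather than a power $\bar\lambda^{\,j}$ with $1\leq j\leq p-1$. This is harmless---since $I_G(\lambda^{\,j})=I_G(\lambda)=T$ the Clifford correspondence still applies---but the assertion as written is not correct. Apart from these two points, your passage through $T$ is a legitimate (slightly longer) variant of the paper's direct induction from $P$ to $G$, and your faithfulness argument, while more roundabout than simply observing $\bC_G(N)=N$, is valid.
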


\begin{proof}
Recall the setup in Conjecture~\ref{conj:C} that $G$ is a perfect
group having a normal elementary abelian $p$-group $N$ for some
prime $p$ dividing $|H|$ such that $G/N\cong H$. We wish to show
that there exists a faithful character $\chi\in\Irr(G)$ such that
$\chi(1)_p<|N|$. For this purpose, by Proposition~\ref{prop-tech},
we may assume that the defining characteristic of $H$ is $p$.

Let $P/N$ be a Sylow $p$-subgroup of $G/N$. Let $M$ be a maximal
subgroup of $N$ (so $M$ is also elementary abelian whose rank is one
less than that of $N$) such that $M$ is normal in $P$. By
hypothesis, there exists $\psi\in\Irr(P/M|N/M)$ such that $\psi(1)$
is less than the size of the natural $\F_q$-module $V$ for $H$.
Notice also that $|V|\leq|N|$, so $\psi(1)<|N|$.

Let $\lambda\in\Irr(N)$ be a (non-principal) character lying under
$\psi$ (here $\psi$ is viewed as a character of $P$). Since
$\psi^G(1)_p=\psi(1)<|N|$, there exists $\chi\in\Irr(G)$ lying over
$\psi$ such that $\chi(1)_p<|N|$. Since $\chi$ lies over $\psi$,
$\chi$ also lies over $\lambda$. We conclude that
$\chi\in\Irr(G|N)$, so $\chi$ is faithful, as wanted.
\end{proof}

E. O'Brien has checked with computer that
Hypothesis~\ref{hypothesis} holds when $H=\Sp_8(2)$. In fact, every
central extension of a Sylow $2$-subgroup $P$ of $\Sp_8(2)$ has a
character $\psi\in\Irr(P|Z)$ with $\psi(1)\leq2^6$. His computations
also suggest that Hypothesis~\ref{hypothesis} holds when
$H=\Sp_{2n}(2)$ for $5\leq n\leq 8$, but in these cases there are
too many central extensions of the Sylow $2$-subgroups. According to
these computations, perhaps it is true that there always exists
$\psi\in\Irr(P|Z)$ such that its degree is at most $2^{d-2}$, where
$d$ is the dimension of the natural module for $H$.


\end{document}